\newtheorem{thm}{Theorem}[section]
\newtheorem{Pros}{Proposition}[section]
\newtheorem{lemma}{Lemma}[section]
\theoremstyle{definition}
\newtheorem{define}{Definition}[section]
\theoremstyle{remark}
\newtheorem{rem}{Remark}[section]
\numberwithin{equation}{section}
\begin{document}
\bigskip

\centerline{\Large\bf  Global regularity for the 2D micropolar  }
\smallskip

\centerline{\Large\bf  Rayleigh-B$\rm\acute{e}$nard  convection system with velocity  }

\centerline{\Large\bf  zero dissipation and temperature critical }

\centerline{\Large\bf dissipation }

\bigskip

\centerline{ Baoquan Yuan\footnote{Corresponding Author: B. Yuan} \ and\ Changhao Li}

\medskip
\centerline{School of Mathematics and Information Science, Henan Polytechnic University,
}
\medskip

\centerline{  Henan,  454000,  China.}

\medskip

\centerline{(E-mail: \texttt{bqyuan@hpu.edu.cn, lch1162528727@163.com})}

\medskip
\bigskip

{\bf Abstract:}~~%
This paper studies the global regularity problem for the 2D micropolar
Rayleigh-B$\rm\acute{e}$nard convection system with velocity zero dissipation, micro-rotation velocity Laplace dissipation and temperature critical dissipation. By introducing a combined quantity and using the technique of Littlewood-Paley decomposition, we establish the global regularity result of solutions to this system.

{\vskip 1mm

 {\bf Keywords:}
micropolar Rayleigh-B$\rm\acute{e}$nard; global regularity; Besov space; Sobolev space.

{\bf MSC(2020):}\quad 35Q35; 35B65;  76D03.}

\vskip .3in
\section{Introduction}
In this paper, we consider a mathematical model depicting in the framework of the Boussinesq approximation, the heat convection in the micropolar fluid \cite{KLL2019,KL2020,T2006}, which is called micropolar Rayleigh--B$\rm\acute{e}$nard convection system. The micropolar Rayleigh--B\'{e}nard convection system is a coupled system, including the micropolar equation and Rayleigh--B\'{e}nard equation. The micropolar equations were first introduced in 1966 by Eringen \cite{E1966}, which enables us to consider some physical phenomena that cannot be treated by the classical Navier-Stokes equations for the viscous incompressible fluids, for example, the motion of animal blood, liquid crystals and dilute aqueous polymer solutions, etc. The Rayleigh-B$\rm\acute{e}$nard equation can be used to model the behavior of the fluid layer filling the region between two rigid surfaces heated from below. The standard micropolar Rayleigh--B$\rm\acute{e}$nard convection system is as follows
\begin{equation}\label{1.1}
\left\{\aligned
&\partial_{t}u+u\cdot\nabla u-(\mu+\chi)\Delta u+\nabla p=2\chi\nabla\times\omega+e_{3}\theta , \\
&\partial_{t}\omega+u\cdot\nabla\omega-\nu\Delta\omega+4\chi\omega-\eta\nabla\nabla\cdot\omega=2\chi\nabla\times u, \\
&\partial _{t}\theta+u\cdot\nabla\theta-\kappa\Delta\theta=u\cdot e_{3}, \\
&\nabla\cdot u=0,\\
&u(x,0)=u_{0}(x),\omega(x,0)=\omega_{0}(x),\theta(x,0)=\theta_{0}(x),
\endaligned\right.
\end{equation}
where $u$ denotes the velocity of the fluid, $\omega$ denotes the micro-rotation velocity, $\theta$ is the scalar temperature, $p$ is the pressure. $\mu$, $\chi$ and $\kappa$ are the kinematic viscosity, the vortex viscosity and the thermal diffusivity, respectively. $\eta$ and $\nu$ are angular viscosities. The forcing term $\theta e_{3}$ in the momentum equation describes the action of the buoyancy force on fluid motion and $u\cdot e_{3}$ models the Rayleigh-B\'{e}nard convection in a heated inviscid fluid.

If $\chi=0$, $\omega=0$ and the Rayleigh--B$\rm\acute{e}$nard convection term $u\cdot e_{3}=0$, system (\ref{1.1}) becomes the Boussinesq equation,  which models geophysical flows such as atmospheric fronts and oceanic circulation, and plays an important role in the study of Rayleigh-B\'{e}nard convection. Due to its physical application and mathematical significance, Boussinesq equations have attracted considerable attention in recent years and have made some progress \cite{C2006,AHM2020,HK2007,HKR2010,Y2016}. When the effect of temperature is neglected, namely $\theta=0$, then system ({\ref{1.1}}) reduces to the classical micropolar system. More advances about the micropolar equations with partial dissipation refer to \cite{DLW2017,DZ2010,GR1977}.

If $\chi=0$ and $\omega=0$, system ({\ref{1.1}}) reduces  to the B$\rm\acute{e}$nard equation.
In 2012, Wu and Xue \cite{WX2012} established Global well-posedness for the 2D inviscid B\'enard system with temperature fractional diffusivity, which is shown below:
\begin{equation}\label{1.2}
\left\{\aligned
&\partial_{t}u+u\cdot\nabla u+\nabla p=e_{2}\theta, \\
&\partial_{t}\theta+u\cdot\nabla\theta+\Lambda^{\beta}\theta=u\cdot e_{2}, \\
&\nabla\cdot u=0, \\
&(u,\theta)(x,t)_{t=0}=(u_{0},\theta_{0})(x),~~~~x\in\mathbb{R}^{2},
\endaligned\right.
\end{equation}
where $1<\beta<2$ and $\Lambda=(-\Delta)^{\frac{1}{2}}$ denotes the Zygmund operator. Wu and Xue \cite{WX2012} obtained the Key estimate of $\|\theta\|_{L^{\infty}}$ by the DeGiorgi-Nash estimate approach. In 2017, Ye \cite{Y2017} provided a simple approach to obtain the $\|\theta\|_{L^{\infty}}$ estimate of system (\ref{1.2}).
When $\beta=1$, the system (\ref{1.2}) corresponds to the 2D inviscid B\'enard system with temperature critical diffusivity, and it is difficult to show the global regularity in this case by the methods of  Wu and Xue  \cite{WX2012} and  Ye \cite{Y2017}. The main obstacle is the lack of the $L^{\infty}$-information of $\theta$.
In this paper, we investigate the case of $\beta=1$, which is called the critical case, coupled with the micropolar equation.
The specific equation is as follows
\begin{equation}\label{1.3}
\left\{\aligned
&\partial_{t}u+u\cdot\nabla u+\nabla p=2\chi\nabla\times\omega+e_{2}\theta, \\
&\partial_{t}\omega+u\cdot\nabla\omega-\nu\Delta\omega+4\chi\omega=2\chi\nabla\times u, \\
&\partial_{t}\theta+u\cdot\nabla\theta+\Lambda\theta=u\cdot e_{2}, \\
&\nabla\cdot u=0, \\
&(u,\omega,\theta)(x,t)_{t=0}=(u_{0},\omega_{0},\theta_{0})(x),~~~~x\in\mathbb{R}^{2}.
\endaligned\right.
\end{equation}
The standard 2D micropolar Rayleigh--B$\rm\acute{e}$nard convection can be written as
\begin{equation}\label{1.4}
\left\{\aligned
&\partial_{t}u+u\cdot\nabla u-(\mu+\chi)\Delta u+\nabla p=2\chi\nabla\times\omega+e_{2}\theta, \\
&\partial_{t}\omega+u\cdot\nabla\omega-\nu\Delta\omega+4\chi\omega=2\chi\nabla\times u, \\
&\partial_{t}\theta+u\cdot\nabla\theta-\kappa\Delta\theta=u\cdot e_{2}, \\
&\nabla\cdot u=0, \\
&(u,\omega,\theta)(x,t)_{t=0}=(u_{0},\omega_{0},\theta_{0})(x),~~~~x\in\mathbb{R}^{2}.
\endaligned\right.
\end{equation}
In 2020, Xu and Chi \cite{XC2020} obtained global regularity for the system (\ref{1.4}) with temperature zero diffusivity (i.e., $\kappa=0$). In 2021, Wang \cite{W2021} established the global regularity of the system (\ref{1.4}) without velocity dissipation (i.e., $\mu+\chi=0$); Deng and Shang \cite{DS2021} obtained global regularity for the system (\ref{1.4}) with only velocity dissipation (i.e., $\nu=\kappa=0$). In this paper, we study the global regularity of the system (\ref{1.3}), which is equivalent to the system (\ref{1.4}) with velocity zero dissipation and temperature critical dissipation (i.e., $\mu+\chi=0$ and $-\Delta\theta$ is replaced by $\Lambda\theta$). The main result is stated as follows.

\begin{thm}\label{thm1.1}
Consider ({\ref{1.3}}) with $\chi>0$ and $\nu>0$, Assume $(u_{0},\omega_{0},\theta_{0})\in H^{s}(\mathbb{R}^{2})$ with $s>2$ and $\nabla\cdot u_{0}=0$. Then the system ({\ref{1.3}}) has a unique global solution $(u,\omega,\theta)$ such that for any $T>0$,
\begin{align*}
u&\in C([0,T);H^{s}(\mathbb{R}^{2})),\\
\omega\in C([0,T);&H^{s}(\mathbb{R}^{2}))\cap L^{2}(0,T;H^{s+1}(\mathbb{R}^{2})),\\
\theta\in C([0,T);&H^{s}(\mathbb{R}^{2}))\cap L^{2}(0,T;H^{s+\frac{1}{2}}(\mathbb{R}^{2})).
\end{align*}
\end{thm}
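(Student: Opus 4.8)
The plan is to combine the standard local well-posedness in $H^s$ ($s>2$), obtained by Friedrichs mollification together with a fixed-point/energy argument, with a global-in-time a priori bound; by a Beale--Kato--Majda type continuation criterion — the dissipation present on $\omega$ and $\theta$ reduces everything to controlling $\int_0^T\|\nabla u(\tau)\|_{L^\infty}\,d\tau$ — it is enough to show that this integral is finite for every $T>0$. I would first record the basic energy estimate: pairing the momentum equation with $u$, the micro-rotation equation with $\omega$, and the temperature equation with $\theta$, using $\nabla\cdot u=0$, rewriting the coupling $\int(\nabla\times\omega)\cdot u\,dx=\int\omega\,(\nabla\times u)\,dx$, and absorbing the cross terms into $\nu\|\nabla\omega\|_{L^2}^2$ and $4\chi\|\omega\|_{L^2}^2$, so that Gronwall's inequality gives $u\in L^\infty_TL^2$, $\omega\in L^\infty_TL^2\cap L^2_T\dot H^1$ and $\theta\in L^\infty_TL^2\cap L^2_T\dot H^{1/2}$.

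Next I would introduce the combined quantity. Writing $\varpi=\partial_1u_2-\partial_2u_1$, the curl of the momentum equation turns the micropolar coupling into a term proportional to $\Delta\omega$ and the buoyancy into $\partial_1\theta$. Using the micro-rotation equation to eliminate $\nu\Delta\omega$, and applying $\Lambda^{-1}\partial_1$ to the temperature equation to trade the troublesome term $\partial_1\theta$, one is led to
\[
\mathcal G=\varpi+\frac{2\chi}{\nu}\,\omega+\Lambda^{-1}\partial_1\theta,
\]
which solves a pure transport equation $\partial_t\mathcal G+u\cdot\nabla\mathcal G=c_0\,\mathcal G+F$, where $F$ collects (i) zeroth-order terms in $\omega$, $\Lambda^{-1}\partial_1\theta$ and $\Lambda^{-1}\partial_1u_2$, all controlled in $L^p$ by the energy bounds and interpolation, and (ii) the commutator $[\Lambda^{-1}\partial_1,\,u\cdot\nabla]\theta$. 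The crucial structural gain is that \emph{no term in the equation for $\mathcal G$ loses a derivative on $\theta$}; this is the mechanism that compensates for the missing velocity dissipation and for the missing $L^\infty$ bound on $\theta$ in the inviscid B\'enard setting.

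The heart of the proof would then be a continuity/bootstrap argument carried out with the Littlewood--Paley decomposition: from the transport equation for $\mathcal G$ one estimates $\mathcal G$ in $L^q$ (for some $q>2$), the commutator being handled by Bony's paraproduct decomposition, which produces factors of $\|\nabla u\|_{L^\infty}$ balanced against $L^q$-norms of $\theta$ that are interpolated from the energy bound and the $L^\infty$-bound being bootstrapped. Control of $\mathcal G$ gives, through $\varpi=\mathcal G-\frac{2\chi}{\nu}\omega-\Lambda^{-1}\partial_1\theta$ and the Biot--Savart law, a bound for $u$ in $L^\infty_TL^\infty$; the maximum principle for the temperature equation (via the C\'ordoba--C\'ordoba positivity inequality for $\Lambda$) then furnishes
\[
\|\theta(t)\|_{L^\infty}\le\|\theta_0\|_{L^\infty}+\int_0^t\|u(\tau)\|_{L^\infty}\,d\tau,
\]
which is exactly the $L^\infty$ information on $\theta$ inaccessible to the methods of Wu--Xue and Ye in the critical case. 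Feeding this back and using a logarithmic Sobolev inequality $\|\nabla u\|_{L^\infty}\lesssim 1+\|\varpi\|_{L^\infty}\log(e+\|u\|_{H^s})$ closes the bootstrap and yields $\int_0^T\|\nabla u(\tau)\|_{L^\infty}\,d\tau<\infty$ together with bounds for $\|\varpi\|_{L^\infty}$, $\|\omega\|_{L^\infty}$ and $\|\theta\|_{L^\infty}$ depending only on $T$ and the data. I expect this step — the simultaneous low-regularity control of $\varpi$ (no velocity dissipation) and of $\theta$ (only critical dissipation) — to be the main obstacle.

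Finally, with those bounds available I would close the $H^s$ estimates: applying $\Lambda^s$ to each equation, using Kato--Ponce commutator and product estimates on the transport terms, letting the dissipative contributions $\nu\|\omega\|_{H^{s+1}}^2$ and $\|\theta\|_{H^{s+1/2}}^2$ absorb the derivative-raising coupling terms, and estimating the remaining nonlinearities with the $L^\infty$ bounds already obtained; a logarithmic Gronwall inequality then gives $\sup_{[0,T]}\|(u,\omega,\theta)\|_{H^s}<\infty$, and the dissipative gains provide the asserted $L^2_TH^{s+1}$ bound for $\omega$ and $L^2_TH^{s+1/2}$ bound for $\theta$. Uniqueness follows from an energy estimate on the difference of two solutions in a lower-order norm.
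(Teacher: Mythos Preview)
Your architecture matches the paper's: the same combined quantity $\Gamma=\Omega+\omega+\mathcal{R}_1\theta$ (your $\mathcal G$), reduction to $\|\Omega\|_{L^\infty}$, and a final logarithmic Gronwall for $H^s$. But the step you yourself flag as ``the main obstacle'' is where the proposal has a genuine gap, and the paper's mechanism is more specific than a generic bootstrap.

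First, your commutator bookkeeping is backwards. The available estimate for the $L^r$ energy on $\mathcal G$ is $\|[\mathcal R_1,u\cdot\nabla]\theta\|_{L^r}\le C\|\nabla u\|_{L^r}\|\theta\|_{L^\infty}$, not a bound involving $\|\nabla u\|_{L^\infty}$ and $\|\theta\|_{L^r}$. So the quantity you must control \emph{inside} the $\mathcal G$ estimate is $\|\theta\|_{L^\infty}$ itself, and your bootstrap (get $\mathcal G\in L^r$ $\Rightarrow$ $u\in L^\infty$ $\Rightarrow$ $\theta\in L^\infty$ via the maximum principle $\Rightarrow$ feed back) is circular without a quantitative handle. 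The paper breaks the loop as follows: (i) a direct $L^p$ energy on the $\theta$-equation gives $\|\theta(t)\|_{L^p}\le C p$ for all $p<\infty$; (ii) a Littlewood--Paley splitting then yields $\|\theta\|_{L^\infty}\le C+C\log\bigl(e+\|\Lambda^{k+\frac12}\theta\|_{L^2}^2\bigr)$ for any $k>\tfrac12$; (iii) one runs a $\dot H^k$ estimate on $\theta$ \emph{simultaneously} with the $L^r$ estimates of $\Gamma$ and $\omega$ (this is where the constraint $\tfrac12<k\le\frac{r-2}{r}$, $r>4$ appears), so that $\|\Lambda^{k+\frac12}\theta\|_{L^2}^2$ shows up as the dissipative term on the left and only logarithmically on the right; (iv) a logarithmic Gronwall lemma of the form $A'+B\le (m+n\log(A+B+e))(A+e)$ closes. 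Only \emph{after} this does one recover $u\in L^\infty_TL^\infty$ by interpolation and then $\|\theta\|_{L^\infty}$ from the maximum principle.

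Second, two further ingredients are missing from your outline. Passing from $\Omega\in L^r$ to $\Omega\in L^\infty$ is a separate argument: one first uses the critical dissipation to prove a smoothing bound $\theta\in\tilde L^1_tB^1_{p,\infty}\hookrightarrow L^1_tB^\epsilon_{\infty,1}$, then applies the Besov commutator estimate $\|[\mathcal R_1,u\cdot\nabla]\theta\|_{B^0_{\infty,1}}\le C(\|\Omega\|_{L^\infty}+\|\Omega\|_{L^p})(\|\theta\|_{B^\epsilon_{\infty,1}}+\|\theta\|_{L^p})$ and a Gronwall on $\|\Omega\|_{L^\infty}$. And in the $H^s$ step, the Kato--Ponce commutator $[\Lambda^s,u\cdot\nabla]\omega$ produces a factor $\|\nabla\omega\|_{L^\infty}$ that you have not bounded; the paper obtains it via the Duhamel representation for $\omega$ and the heat-kernel decay $\|\nabla e^{t\Delta}f\|_{L^\infty}\le Ct^{-1/2}\|f\|_{L^\infty}$.
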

\begin{rem}\label{Remark1.1}
The results of Wang \cite{W2021} is improved by Theorem \ref{thm1.1}, in which we reduce the temperature dissipation term $-\Delta\theta$ to $\Lambda\theta$.
\end{rem}

\begin{rem}\label{Remark1.2}
If $\chi=0$, $\omega=0$ the  2D micropolar Rayleigh--B$\rm\acute{e}$nard convection system (\ref{1.3}) reduces to the 2D Euler-Boussinesq-B$\rm\acute{e}$nard equations. With Yudovich's type data, Ye \cite{Ye2023} also obtain the global unique solution using a different method.

\end{rem}

To prove the Theorem \ref{thm1.1}, a large portion of the efforts are devoted to obtaining the a priori estimates for $(u,\omega,\theta)$. In this paper, we shall consider the vorticity $\Omega\triangleq\nabla\times u$, which satisfies
\begin{align}\label{1.5}
\partial_{t}\Omega+u\cdot\nabla\Omega=-\Delta\omega+\partial_{1}\theta.
\end{align}

To deal with the terms on the right hand side of the vorticity equation, we construct a combined quantity $\Gamma\triangleq\Omega+\mathcal{R}_{1}\theta+\omega$ with ${R}_{1}\triangleq\partial_{1}\Lambda^{-1}$. We estimate $\|\theta\|_{L^{\infty}}$ by doing the $\dot{H}^{k}$ estimate of $\theta$ and the $L^{r}$ estimates of $\Gamma$ and $\omega$, where $\frac{1}{2}<k\leq\frac{r-2}{r}$ and $4< r<\infty$.
Then, the desired estimate $\|\Omega\|_{L^{\infty}}$ can be obtained from the estimates of $\|\Gamma\|_{L^{\infty}}$, $\|\mathcal{R}_{1}\theta\|_{L^{\infty}}$ and $\|\omega\|_{L^{\infty}}$. Furthermore, with the aid of the property of the heat kernel (see Lemma \ref{lem2.9}), we can obtain the estimate of $\|\nabla\omega\|_{L^{\infty}}$. We further do the $\dot{H}^{1}$ estimate of the temperature equation and derive the estimate of $\int_{0}^{T}\|\Lambda^{\frac{3}{2}}\theta\|_{L^{2}}^{2}\mathrm{d}t$. With the disposal of the previous estimates at hand, we are able to show the global $H^{s}$ estimates of $(u,\omega,\theta)$ by using the logarithmic Sobolev and Gronwall's inequalities (see Lemmas \ref{lem2.6}, \ref{lem2.7}).

Throughout this paper, the letter $C$ denotes a generic constant whose exact value may change from line to line, although the same letter $C$ is used. Because the exact values of $\mu$, $\chi$ and $\nu$ do not affect the result, for simplicity, we set  $\mu=\chi=\frac{1}{2}$ and $\nu=1$.

\vskip .3in
\section{Preliminaries}
In this section we first introduce Littlewood-Paley decomposition and the definition of Besov spaces. Materials presented here can be found in several books and papers \cite{BCD2011,M2004,T1983}. Given $f(x)\in\mathcal{S}(\mathbb{R}^{d})$, the Schwartz class of rapidly decreasing functions, define the Fourier transform as
\begin{align*}
\hat{f}(\xi)=\mathcal{F}f(\xi)=(2\pi)^{-d/2}\int_{\mathbb{R}^{d}}e^{-ix\cdot\xi}f(x)\mathrm{d}x,
\end{align*}
and its inverse Fourier transform:
\begin{align*}
\check{f}(\xi)=\mathcal{F}^{-1}f(\xi)=(2\pi)^{-d/2}\int_{\mathbb{R}^{d}}e^{ix\cdot\xi}f(\xi)\mathrm{d}\xi.
\end{align*}
Choose a nonnegative radial function $\chi\in C_{0}^{\infty}(\mathbb{R}^{d})$ such that $0\leq\chi(\xi)\leq1$ and
\begin{align*}
\chi(\xi)=\begin{cases}
{ \begin{array}{ll} 1,\ \ \ \ \mathrm{for}\ |\xi|\leq\frac{3}{4},\\
0,\ \ \ \ \mathrm{for}\ |\xi|>\frac{4}{3},
 \end{array} }
\end{cases}
\end{align*}
and let $\hat{\varphi}(\xi)=\chi(\xi/2)-\chi(\xi)$, $\chi_{j}(\xi)=\chi(\frac{\xi}{2^{j}})$ and $\hat{\varphi}_{j}(\xi)=\hat{\varphi}(\frac{\xi}{2^{j}})$ for $j\in\mathbb{Z}$. Write
\begin{align*}
h(x)~~&=~~\mathcal{F}^{-1}\chi(x),\\
h_{j}(x)~~&=~~2^{dj}h(2^{j}x),\\
\varphi_{j}(x)~~&=~~2^{dj}\varphi(2^{j}x).
\end{align*}
Define the inhomogeneous operators $S_{j}$ and $\triangle_{j}$, respectively, as
\begin{align*}
\triangle_{-1}u(x)~~&=~~h\ast u(x),\\
\triangle_{j}u(x)~~&=~~\varphi_{j}\ast u(x)~~\mathrm{for}~j\geq0,\\
\triangle_{j}u(x)~~&=~~0~~\mathrm{for}~j\leq-2,\\
S_{j}u(x)~~&=\sum_{-1\leq k\leq j-1}\Delta_{k}u(x)~~\mathrm{for}~j\geq0.
\end{align*}
Formally $\triangle_{j}$ is a frequency projection to the annulus $|\xi|\approx2^{j}$, while $S_{j}$ is a frequency projection to the ball $|\xi|\lesssim2^{j}$ for $j\geq0$. For any $u(x)\in L^{2}(\mathbb{R}^{d})$ we have the inhomogeneous Littlewood-Paley decomposition
\begin{align*}
u(x)~~=~~h\ast u(x)+\sum_{j\geq0}\varphi_{j}\ast u(x).
\end{align*}
Clearly,
\begin{align*}
\mathrm{supp}\chi(\xi)\cap\mathrm{ supp}\hat{\varphi}_{j}(\xi)~~&=~~\emptyset~\mathrm{for}~j\geq1,\\
\mathrm{supp}\hat{\varphi}_{j}(\xi)\cap \mathrm{supp}\hat{\varphi}_{j'}(\xi)~~&=~~\emptyset~\mathrm{for}~|j-j'|\geq2.
\end{align*}

We now recall the definition of the Besov space.
\begin{define}\label{define2.1}
Let $s\in\mathbb{R}$ and $1\leq p,~q\leq+\infty$, the Besov space $B_{p,q}^{s}(\mathbb{R}^{d})$ abbreviated as $B_{p,q}^{s}$ is defined by
\begin{align*}
B_{p,q}^{s}=\{f(x)\in \mathcal{S}'(\mathbb{R}^{d});\|f\|_{B_{p,q}^{s}}<+\infty\},
\end{align*}
where
\begin{align*}
\|f\|_{B_{p,q}^{s}}=\begin{cases}
{ \begin{array}{ll} (\sum_{j\geq-1}2^{jsq}\|\Delta_{j} f\|_{L^{p}}^{q})^{\frac{1}{q}},~~~\mathrm{for}~q<+\infty,\\
\sup_{j\geq-1}2^{js}\|\Delta_{j} f\|_{L^{p}},~~~~~~~\mathrm{for}~q=+\infty
 \end{array} }
\end{cases}
\end{align*}
is the Besov norm.
\end{define}

Next we recall the definition of two kinds of space-time Besov spaces.
\begin{define}\label{defin2.2}
Let $T>0$, $s\in\mathbb{R}$, $1\leq p,~q,~r\leq \infty$ and $j\geq-1$.\\
(1) We call $u\in L^{r}(0,T;B_{p,q}^{s}(\mathbb{R}^{d}))$ if and only if
\begin{align*}
\|u\|_{L_{T}^{r}B_{p,q}^{s}}\triangleq\|(2^{js}\|\Delta_{j}u\|_{L^{p}})_{l^{q}}\|_{L_{T}^{r}}<\infty.\\
\end{align*}
(2) We call $u\in \tilde{L}^{r}(0,T;B_{p,q}^{s}(\mathbb{R}^{d}))$ if and only if
\begin{align*}
\|u\|_{\tilde{L}_{T}^{r}B_{p,q}^{s}}\triangleq\|2^{js}\|\Delta_{j}u\|_{L_{T}^{r}L^{p}}\|_{l^{q}}<\infty.\\
\end{align*}
\end{define}

The following lemma states the  Bernstein's inequalities (see \cite{BCD2011,CMZ2007}).
\begin{lemma}\label{lem2.1}
(1) Let $\alpha>0$ and $1\leq p\leq q\leq\infty$, if f satisfies
\begin{align*}
\mathrm{supp}\hat{f}\subset\{\xi\in\mathbb{R}^{d}:~|\xi|\leq K2^{j}\}
\end{align*}
for some integer $j$ and a constant $K>0$, then
\begin{align*}
\|(-\triangle)^{\alpha}f\|_{L^{q}}\leq C_{1}2^{2\alpha j+jd(\frac{1}{p}-\frac{1}{q})}\|f\|_{L^{p}}.
\end{align*}
(2) Let $\alpha>0$ and $1\leq p\leq q\leq\infty$, if f satisfies
\begin{align*}
\mathrm{supp}\hat{f}\subset\{\xi\in\mathbb{R}^{d}:~K_{1}2^{j}\leq|\xi|\leq K_{2}2^{j}\}
\end{align*}
for some integer $j$ and constants $0<K_{1}\leq K_{2}$, then
\begin{align*}
C_{1}2^{2\alpha j}\|f\|_{L^{q}}\leq\|(-\triangle)^{\alpha}f\|_{L^{q}}\leq C_{2}2^{2\alpha j+jd(\frac{1}{p}-\frac{1}{q})}\|f\|_{L^{p}},
\end{align*}
where $C_{1}$ and $C_{2}$ are two constants depending on $\alpha$, $p$ and $q$.\\
(3) Let $0\leq\alpha\leq 2$ and $2\leq p<\infty$. Then there exist two positive constants $c_{p}$ and $C_{p}$ such that for any $f\in S'$ and $j\geq0$, we have
\begin{align*}
c_{p}2^{\frac{2\alpha j}{p}}\|\Delta_{j}f\|_{L^{p}}\leq
\|\Lambda^{\alpha}(|\Delta_{j}f|^{\frac{p}{2}})\|_{L^{2}}^{\frac{2}{p}}\leq
C_{p}2^{\frac{2\alpha j}{p}}\|\Delta_{j}f\|_{L^{p}}.
\end{align*}
\end{lemma}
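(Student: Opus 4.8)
The plan is to establish the three assertions in the order (2), (1), (3): the annulus estimate is the analytic core, the ball estimate follows from it by dyadic summation, and the composition estimate of (3) is of a different and more delicate nature. For part (2), I would argue by rescaling to a fixed dyadic block combined with Young's inequality. Let $\phi\in C_0^\infty(\mathbb{R}^d\setminus\{0\})$ be supported in $\{K_1/2\le|\xi|\le 2K_2\}$ and equal to $1$ on $\{K_1\le|\xi|\le K_2\}$; since $\hat f$ is supported in $\{K_12^j\le|\xi|\le K_22^j\}$ we may insert $\phi(\cdot/2^j)$ freely. Writing $\widehat{(-\Delta)^\alpha f}=|\xi|^{2\alpha}\phi(\xi/2^j)\hat f$ and factoring the scaling $|\xi|^{2\alpha}\phi(\xi/2^j)=2^{2\alpha j}m(\xi/2^j)$ with $m(\zeta)=|\zeta|^{2\alpha}\phi(\zeta)$, I get $(-\Delta)^\alpha f=2^{2\alpha j}(\check m)_{2^j}\ast f$, where $(\check m)_{2^j}(x)=2^{jd}\check m(2^jx)$ and $\check m$ is Schwartz because $m$ is smooth and compactly supported away from the origin. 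Young's inequality with $1+\frac1q=\frac1r+\frac1p$ gives $\|(-\Delta)^\alpha f\|_{L^q}\le 2^{2\alpha j}\|(\check m)_{2^j}\|_{L^r}\|f\|_{L^p}$, and the scaling identity $\|(\check m)_{2^j}\|_{L^r}=2^{jd(1-1/r)}\|\check m\|_{L^r}=2^{jd(1/p-1/q)}\|\check m\|_{L^r}$ yields the upper bound. For the lower bound I would invert the symbol: from $\widehat{(-\Delta)^\alpha f}=|\xi|^{2\alpha}\hat f$ one recovers $\hat f=|\xi|^{-2\alpha}\phi(\xi/2^j)\,\widehat{(-\Delta)^\alpha f}$, and the same scaling argument applied to $|\zeta|^{-2\alpha}\phi(\zeta)$ (again smooth and compactly supported away from the origin, hence with Schwartz kernel) gives, via Young's inequality with the scale-invariant kernel $L^1$ norm, $\|f\|_{L^q}\le C2^{-2\alpha j}\|(-\Delta)^\alpha f\|_{L^q}$, which is the reversed estimate.

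For part (1) the only obstruction is that $|\xi|^{2\alpha}$ is singular at the origin, so the direct convolution argument fails; instead I would reduce to part (2) by the inhomogeneous Littlewood--Paley decomposition. Writing $f=\sum_{-1\le k\le j'}\Delta_k f$ with $2^{j'}\approx 2^j$ chosen so that $\{|\xi|\le K2^j\}$ is covered, I apply the annulus bound (2) to each $\Delta_k f$ with $k\ge 0$, treat the fixed low block $\Delta_{-1}f$ directly (its contribution is $O(1)$), and use the uniform boundedness $\|\Delta_k f\|_{L^p}\le C\|f\|_{L^p}$ of the projections. Summing, $\|(-\Delta)^\alpha f\|_{L^q}\le C\|f\|_{L^p}\sum_{k\le j'}2^{k(2\alpha+d(1/p-1/q))}$; since $\alpha>0$ and $p\le q$ the exponent is strictly positive, so the geometric series is dominated by its top term $2^{j'(2\alpha+d(1/p-1/q))}\approx 2^{j(2\alpha+d(1/p-1/q))}$, which is exactly the claimed bound.

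Part (3) is where the real work lies, and I expect it to be the main obstacle; here I would follow the generalized Bernstein inequality of Chen--Miao--Zhang \cite{CMZ2007}. Set $F=\Delta_j f$ and $G=|F|^{p/2}$, and note $\|G\|_{L^2}^2=\|F\|_{L^p}^p$, which already settles $\alpha=0$. For the upper bound, for $0<\alpha<2$ I would use the singular-integral representation $\Lambda^\alpha G(x)=c_{d,\alpha}\,\mathrm{P.V.}\int (G(x)-G(x-y))|y|^{-d-\alpha}\,dy$ together with the elementary inequality $\big||a|^{p/2}-|b|^{p/2}\big|\le C(|a|^{p/2-1}+|b|^{p/2-1})|a-b|$, valid for $p\ge 2$, to dominate $|G(x)-G(x-y)|$; splitting the $y$-integral at $|y|=2^{-j}$, using the localized bound $\|\nabla F\|_{L^p}\le C2^j\|F\|_{L^p}$ on the near region and $\int_{|y|>2^{-j}}|y|^{-d-\alpha}\,dy\approx 2^{\alpha j}$ on the far region, one obtains $\|\Lambda^\alpha G\|_{L^2}\le C2^{\alpha j}\|F\|_{L^p}^{p/2}$, i.e.\ the right-hand inequality. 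The lower bound is the delicate point: after the reduction it amounts to $2^{\alpha j}\|G\|_{L^2}\le C\|\Lambda^\alpha G\|_{L^2}$, that is, the portion of $G=|F|^{p/2}$ with frequencies $\gtrsim 2^j$ must carry a fixed fraction of its $L^2$ energy. This is not automatic, since $|F|^{p/2}$ necessarily produces a nonzero low-frequency (mean) component; I would establish it by exploiting the exact localization $F=\widetilde\Delta_j F$ in the annulus $|\xi|\approx 2^j$ and a quantitative frequency splitting bounding $\int_{|\xi|\le\delta 2^j}|\hat G|^2$ by a fixed fraction of $\|G\|_{L^2}^2$ for a suitable $\delta>0$, so that $\int_{|\xi|>\delta 2^j}|\hat G|^2\ge\frac12\|G\|_{L^2}^2$ and hence $\|\Lambda^\alpha G\|_{L^2}^2\ge(\delta 2^j)^{2\alpha}\cdot\frac12\|G\|_{L^2}^2$. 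Verifying this frequency-splitting step uniformly over the full range $0\le\alpha\le 2$ and $2\le p<\infty$ is the crux, and is precisely the argument carried out in \cite{CMZ2007}.
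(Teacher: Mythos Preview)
The paper does not actually prove this lemma: it is stated as a preliminary result and attributed to the references \cite{BCD2011} and \cite{CMZ2007} (``The following lemma states the Bernstein's inequalities (see \cite{BCD2011,CMZ2007})''). Your proposal is therefore not being compared against an in-paper argument but against those sources, and it reproduces them faithfully: the scaling-plus-Young's-inequality proof of (2), the dyadic summation for (1), and the Chen--Miao--Zhang argument for (3) are exactly the standard proofs in the cited literature.

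Two small remarks on precision. In part (1), the phrase ``treat the fixed low block $\Delta_{-1}f$ directly (its contribution is $O(1)$)'' hides the same singularity of $|\xi|^{2\alpha}$ at the origin that you flagged; the clean way around it is to use the \emph{homogeneous} decomposition $f=\sum_{k\le j'}\dot\Delta_k f$ so that every block has annular Fourier support and (2) applies term by term, the resulting geometric series $\sum_{k\le j'}2^{k(2\alpha+d(1/p-1/q))}$ converging since the exponent is strictly positive. In part (3), your upper-bound sketch via the pointwise inequality $\big||a|^{p/2}-|b|^{p/2}\big|\le C(|a|^{p/2-1}+|b|^{p/2-1})|a-b|$ and a near/far splitting is morally right but, as written, does not close in $L^2$ without an additional H\"older/maximal-function step; the actual CMZ argument is organized somewhat differently. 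These are refinements rather than gaps---your overall strategy is correct and matches what the paper invokes.
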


Next we recall some commutator estimates.
\begin{lemma}\label{lem2.2}
\cite{KPV1991,KP1988} Let  $d$ be the space dimension.
Let $s>0$, $1<r<\infty$ and $\frac{1}{r}=\frac{1}{p_{1}}+\frac{1}{q_{1}}=\frac{1}{p_{2}}+\frac{1}{q_{2}}$ with $q_{1},p_{2}\in(1,\infty)$ and $p_{1},q_{2}\in[1,\infty]$. Then
\begin{align*}
\|[\Lambda^{s},f]g\|_{L^{r}}\leq C(\|\nabla f\|_{L^{p_{1}}}\|\Lambda^{s-1}g\|_{L^{q_{1}}}+\|\Lambda^{s}f\|_{L^{p_{2}}}\|g\|_{L^{q_{2}}}),
\end{align*}
where $[\Lambda^{s},f]g=\Lambda^{s}(fg)-f\Lambda^{s}g$ and $C$ is a constant depending on the indices $s,r,p_{1},q_{1},p_{2},q_{2}$.
\end{lemma}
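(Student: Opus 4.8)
The plan is to reduce the theorem to global a priori bounds that allow the local-in-time $H^s$ solution---which exists for $s>2$ by a standard mollification/energy scheme---to be continued for all time. I would first record the basic energy estimate: testing the three equations of (\ref{1.3}) with $u$, $\omega$, $\theta$ respectively, the transport and pressure terms vanish by $\nabla\cdot u=0$; the buoyancy--convection pair $\langle\theta,u_2\rangle$ is handled by Young's inequality; and the coupling terms coming from the $u$- and $\omega$-equations both reduce to $\langle\omega,\Omega\rangle=\langle\nabla\times\omega,u\rangle$, which is bounded by $\|\nabla\omega\|_{L^2}\|u\|_{L^2}$ and absorbed into the dissipation $\nu\|\nabla\omega\|_{L^2}^2$. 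Gronwall's inequality then yields $(u,\omega,\theta)\in L^\infty_TL^2$ together with $\omega\in L^2_T\dot{H}^1$ and $\theta\in L^2_T\dot{H}^{1/2}$. Because the velocity equation carries no dissipation, the logarithmic Sobolev inequality (Lemma \ref{lem2.6}) reduces the whole problem to obtaining a global bound on $\|\Omega\|_{L^\infty}=\|\nabla\times u\|_{L^\infty}$.

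The mechanism for controlling $\Omega$ is the combined quantity $\Gamma\triangleq\Omega+\mathcal{R}_1\theta+\omega$. Using the vorticity equation (\ref{1.5}), applying $\mathcal{R}_1=\partial_1\Lambda^{-1}$ to the temperature equation (so that $\mathcal{R}_1\Lambda\theta=\partial_1\theta$ exactly cancels the buoyancy forcing $\partial_1\theta$ in (\ref{1.5})), and using $2\chi\nabla\times u=\Omega$ in the $\omega$-equation (so that $-\Delta\omega$ cancels against the angular dissipation), I would show that $\Gamma$ satisfies the forced transport equation
\begin{align*}
\partial_t\Gamma+u\cdot\nabla\Gamma=\Gamma-[\mathcal{R}_1,u\cdot\nabla]\theta+\mathcal{R}_1u_2-\mathcal{R}_1\theta-3\omega,
\end{align*}
in which every singular forcing present in (\ref{1.5}) has disappeared. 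Testing against $|\Gamma|^{r-2}\Gamma$ for $4<r<\infty$, using the $L^r$-boundedness of $\mathcal{R}_1$ and a Calder\'on-type commutator bound for $[\mathcal{R}_1,u\cdot\nabla]\theta$, and running a parallel $L^r$ estimate for the $\omega$-equation, I would close a Gronwall inequality for $\|\Gamma\|_{L^r}+\|\omega\|_{L^r}$ in terms of the already-controlled $L^2$ quantities, obtaining $\Gamma,\omega\in L^\infty_TL^r$.

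The crux, which I expect to be the main obstacle, is the critical $L^\infty$ bound on $\theta$. At $\beta=1$ there is no maximum-principle gain, so neither the De Giorgi--Nash argument of Wu--Xue nor Ye's simplification applies. Instead I would perform a coupled $\dot{H}^k$ estimate of $\theta$ with $\tfrac12<k\le\tfrac{r-2}{r}$: testing the temperature equation with $\Lambda^{2k}\theta$ produces the coercive term $\|\Lambda^{k+1/2}\theta\|_{L^2}^2$, the advection term is estimated through the Kato--Ponce commutator inequality (Lemma \ref{lem2.2}), and the source $\langle\Lambda^ku_2,\Lambda^k\theta\rangle$ is rewritten via $\Omega=\Gamma-\mathcal{R}_1\theta-\omega$ so that the $L^r$ bounds on $\Gamma,\omega$ enter. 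A Littlewood--Paley/Bernstein interpolation (Lemma \ref{lem2.1}) then converts $\theta\in L^\infty_T\dot{H}^k$, together with the dissipation gain and the $L^r$ information, into $\theta\in L^\infty_TL^\infty$. The difficulty is that the two exponent constraints must hold simultaneously: $k>\tfrac12$, needed so that the dissipative gain dominates the nonlinear and coupling interactions, and $k\le\tfrac{r-2}{r}=1-\tfrac2r$, needed so that the $L^r$ data on $\Gamma,\omega$ suffices to close; together these force $r>4$ and leave essentially no slack.

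With $\|\theta\|_{L^\infty}$ in hand, I would bound $\|\Gamma\|_{L^\infty}$ from its transport equation (its forcing now being controlled, the Riesz commutator estimated up to a logarithm) and $\|\mathcal{R}_1\theta\|_{L^\infty}$ by a logarithmic interpolation compensating the failure of $\mathcal{R}_1:L^\infty\to L^\infty$; subtracting then gives $\|\Omega\|_{L^\infty}=\|\Gamma-\mathcal{R}_1\theta-\omega\|_{L^\infty}$. The heat-kernel smoothing estimate (Lemma \ref{lem2.9}) applied to the $\omega$-equation upgrades this to $\|\nabla\omega\|_{L^\infty}$, and a $\dot{H}^1$ estimate of the temperature supplies $\int_0^T\|\Lambda^{3/2}\theta\|_{L^2}^2\,\mathrm{d}t$. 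Finally I would run the $H^s$ energy estimate, bounding the advection commutators through the logarithmic Sobolev inequality (Lemma \ref{lem2.6}) in terms of $\|\Omega\|_{L^\infty}\log(e+\|(u,\omega,\theta)\|_{H^s})$ and closing by Gronwall (Lemma \ref{lem2.7}); this yields the stated global regularity. Uniqueness follows from a standard $L^2$ energy estimate for the difference of two solutions, using the bounds obtained above.
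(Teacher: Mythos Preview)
Your proposal addresses Theorem \ref{thm1.1}, not Lemma \ref{lem2.2}. The stated lemma is the classical Kato--Ponce commutator estimate, which the paper simply cites from \cite{KPV1991,KP1988} and does not prove; there is no proof in the paper to compare against.

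Even read as a strategy for Theorem \ref{thm1.1}, there is a genuine gap in your ordering. You claim that the $L^r$ estimate on $\Gamma$ closes ``in terms of the already-controlled $L^2$ quantities,'' but the Calder\'on-type bound for $[\mathcal{R}_1,u\cdot\nabla]\theta$ in $L^r$ is $C\|\nabla u\|_{L^r}\|\theta\|_{L^\infty}$ (Lemma \ref{lem2.4}), and $\|\theta\|_{L^\infty}$ is exactly the quantity you propose to obtain \emph{afterwards}. The paper does not separate these steps. It first establishes an $L^p$ bound on $\theta$ with growth linear in $p$ (estimate (\ref{3.6}), obtained via Lemma \ref{lem2.8} and careful constant tracking), then combines a high-frequency Bernstein estimate with this $L^p$ bound at low frequencies (choosing $r'\sim 2N$) to prove
\[
\|\theta\|_{L^\infty}\le C+C\log\bigl(e+\|\Lambda^{k+1/2}\theta\|_{L^2}^2\bigr).
\]
This logarithmic bound is then inserted into the \emph{coupled} differential inequality for $\|\Lambda^k\theta\|_{L^2}^2+\|\Gamma\|_{L^r}^2+\|\omega\|_{L^r}^2$ (display (\ref{3.16})), and the whole system is closed in one stroke by the logarithmic Gronwall inequality of Lemma \ref{lem2.7}. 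Without the linear-in-$p$ $L^p$ estimate and this simultaneous logarithmic closure, the circular dependency between $\|\Gamma\|_{L^r}$ and $\|\theta\|_{L^\infty}$ cannot be broken; your sequential scheme would not go through as written.
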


\begin{lemma}\label{lem2.3} \cite{Y2017}
Let  $d$ be the space dimension and $\frac{1}{p}=\frac{1}{p_{1}}+\frac{1}{p_{2}}$ with $p\in[2,\infty)$,
$p_{1}$, $p_{2}$ $\in[2,\infty]$, $q\in[1,\infty]$ as well as $s\in(-1,1-\epsilon)$ for $\epsilon\in(0,1)$. Then
\begin{align*}
\|[\Lambda^{\epsilon},f\cdot\nabla]g\|_{B_{p,q}^{s}}\leq C(\|\nabla f\|_{L^{p_{1}}}\|g\|_{B_{p_{2},q}^{s+\epsilon}}+\|f\|_{L^{2}}\|g\|_{L^{2}}),
\end{align*}
where f is a divergence-free vector field.
\end{lemma}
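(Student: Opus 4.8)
The plan is to prove the estimate by localizing with the Littlewood--Paley blocks $\Delta_{j}$ and applying Bony's paraproduct decomposition to the product $f\cdot\nabla g$. Writing $f=(f_{1},\dots,f_{d})$ so that $f\cdot\nabla g=\sum_{i}f_{i}\partial_{i}g$, I would split each $f_{i}\partial_{i}g$ into the low--high paraproduct $\sum_{k}S_{k-1}f_{i}\,\Delta_{k}\partial_{i}g$, the high--low paraproduct $\sum_{k}S_{k-1}\partial_{i}g\,\Delta_{k}f_{i}$, and the remainder $\sum_{k}\Delta_{k}f_{i}\,\widetilde{\Delta}_{k}\partial_{i}g$, with $\widetilde{\Delta}_{k}=\Delta_{k-1}+\Delta_{k}+\Delta_{k+1}$. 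The commutator $[\Lambda^{\epsilon},f\cdot\nabla]g$ is then the difference of the $\Lambda^{\epsilon}$-version and the plain version of these three pieces. The decisive observation is that the commutator cancellation is needed \emph{only} for the low--high piece: there each of the two halves separately would force $f$ itself (which is not controlled by the right-hand side) together with $g$ at regularity $s+1+\epsilon$, whereas their difference is admissible. For the high--low and remainder pieces the two halves may instead be bounded independently.

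For the principal low--high term I would use that $S_{k-1}f_{i}\,\Delta_{k}\partial_{i}g$ is frequency-localized near $2^{k}$, so that $\Lambda^{\epsilon}$ acting on it equals convolution with the kernel $\psi_{k}(z)=2^{k(d+\epsilon)}\psi(2^{k}z)$ of $\Lambda^{\epsilon}\widetilde{\Delta}_{k}$. This gives the representation
\[
[\Lambda^{\epsilon},S_{k-1}f_{i}\,]\,\partial_{i}\Delta_{k}g(x)=\int \psi_{k}(x-y)\bigl(S_{k-1}f_{i}(y)-S_{k-1}f_{i}(x)\bigr)\partial_{i}\Delta_{k}g(y)\,\mathrm{d}y,
\]
and the mean-value bound $|S_{k-1}f_{i}(y)-S_{k-1}f_{i}(x)|\le|x-y|\int_{0}^{1}|\nabla S_{k-1}f_{i}(x+\tau(y-x))|\,\mathrm{d}\tau$ together with $\int|z|\,|\psi_{k}(z)|\,\mathrm{d}z\lesssim 2^{k(\epsilon-1)}$. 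A bilinear convolution/Hölder estimate with $\tfrac1p=\tfrac1{p_{1}}+\tfrac1{p_{2}}$ then yields $\|[\Lambda^{\epsilon},S_{k-1}f\cdot\nabla]\Delta_{k}g\|_{L^{p}}\lesssim 2^{k\epsilon}\|\nabla f\|_{L^{p_{1}}}\|\Delta_{k}g\|_{L^{p_{2}}}$, where the gain $2^{-k}$ from the kernel absorbs the derivative $2^{k}$ landing on $g$. Multiplying by $2^{js}$ with $j\sim k$ and taking the $\ell^{q}$ norm reproduces exactly $\|\nabla f\|_{L^{p_{1}}}\|g\|_{B^{s+\epsilon}_{p_{2},q}}$. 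This step --- showing that the commutator gains one full derivative and thereby trades $f$ for $\nabla f$ --- is the main obstacle, and it is where the frequency localization and the mean-value argument do the real work.

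For the high--low and remainder pieces I would bound the $\Lambda^{\epsilon}$-half and the plain half separately by Bernstein's inequalities (Lemma \ref{lem2.1}), using $\|\Delta_{k}f\|_{L^{p_{1}}}\lesssim 2^{-k}\|\nabla f\|_{L^{p_{1}}}$ for $k\geq0$ to generate $\nabla f$, and the divergence-free condition to rewrite the remainder as $\sum_{i}\partial_{i}(\Delta_{k}f_{i}\,\widetilde{\Delta}_{k}g)$, moving one derivative outside. Collecting the $2^{js}$-weighted bounds reduces everything to discrete convolution inequalities: the high--low contribution produces a kernel $2^{(k-m)(s+\epsilon-1)}$ coupling the output frequency $2^{k}$ to the low frequency $2^{m}$ of $g$, whose summability requires $s+\epsilon<1$, while the remainder produces a kernel $2^{(j-k)(s+1)}$ coupling the output $2^{j}$ to the higher interacting frequency $2^{k}$, whose summability requires $s>-1$. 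These two requirements are precisely the hypothesis $s\in(-1,1-\epsilon)$, and a discrete Young (or Schur-test) estimate then controls each by $\|\nabla f\|_{L^{p_{1}}}\|g\|_{B^{s+\epsilon}_{p_{2},q}}$.

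Finally, the lowest-frequency blocks (e.g. $k=-1$, where the relation $\|\Delta_{k}f\|_{L^{p_{1}}}\lesssim 2^{-k}\|\nabla f\|_{L^{p_{1}}}$ is unavailable) are handled by Bernstein's inequality from $L^{2}$ to $L^{p_{i}}$, which is legitimate precisely because $p_{1},p_{2}\in[2,\infty]$; their finite contribution is absorbed into the correction $\|f\|_{L^{2}}\|g\|_{L^{2}}$. Summing the three paraproduct pieces together with this low-frequency remainder over $j$ in the $\ell^{q}$ norm then yields the stated estimate.
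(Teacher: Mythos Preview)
The paper does not supply a proof of Lemma~\ref{lem2.3}; it is stated as a quoted result from \cite{Y2017} and used as a black box in the a~priori estimates. Your proposal therefore cannot be compared line by line with anything in the present paper, but it does reproduce the standard mechanism behind such commutator bounds and is essentially correct: Bony's paraproduct splitting, the mean-value/kernel argument on the low--high piece to gain one derivative (so that $f$ is replaced by $\nabla f$), separate treatment of the high--low and remainder pieces via Bernstein, and the discrete convolution bookkeeping that pins down the range $s\in(-1,1-\epsilon)$. The explanation of why the extra term $\|f\|_{L^{2}}\|g\|_{L^{2}}$ appears --- to absorb the $k=-1$ block where $\|\Delta_{k}f\|_{L^{p_{1}}}\lesssim 2^{-k}\|\nabla f\|_{L^{p_{1}}}$ fails and one instead uses Bernstein from $L^{2}$, which needs $p_{1},p_{2}\ge 2$ --- is exactly the right reading of the hypotheses. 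This is, as far as one can tell, the same argument carried out in \cite{Y2017}; the paper itself simply imports the conclusion.
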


\begin{lemma} \label{lem2.4} \cite{Ye2023,HKR2011} Let  $d$ be the space dimension. Let $u$ be a smooth divergence-free vector field and $\theta$ be a smooth scalar function. Then\\
(1) For every $p\in(1,\infty)$ there exists a constant $C=C(p)$ such that
\begin{align*}
\|[\mathcal{R}_{1},u\cdot\nabla]\theta\|_{L^{p}}\leq C\|\nabla u\|_{L^{p}}\|\theta\|_{L^{\infty}}.
\end{align*}\\
(2) For every $(p,r)\in(1,\infty)\times[1,\infty]$ and $\epsilon>0$
there exists a constant $C=C(p,r,\epsilon)$ such that
\begin{align*}
\|[\mathcal{R}_{1},u\cdot\nabla]\theta\|_{B_{\infty,r}^{0}}\leq C(\|\Omega\|_{L^{\infty}}+\|\Omega\|_{L^{p}})(\|\theta\|_{B_{\infty,r}^{\epsilon}}+\|\theta\|_{L^{p}}).
\end{align*}
\end{lemma}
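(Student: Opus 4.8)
The plan is to prove both estimates by Bony's paraproduct decomposition of the product $u\cdot\nabla\theta$, isolating the single paraproduct piece in which a genuine commutator cancellation must be exploited and treating the other pieces by the $L^{p}$-boundedness of $\mathcal{R}_{1}=\partial_{1}\Lambda^{-1}$ together with Bernstein's inequalities (Lemma \ref{lem2.1}). Since $\mathcal{R}_{1}$ commutes with every $\Delta_{j}$, I would write
\begin{align*}
[\mathcal{R}_{1},u\cdot\nabla]\theta=\sum_{j}[\mathcal{R}_{1},S_{j-1}u\cdot\nabla]\Delta_{j}\theta+\sum_{j}[\mathcal{R}_{1},\Delta_{j}u\cdot\nabla]S_{j-1}\theta+\sum_{j}[\mathcal{R}_{1},\Delta_{j}u\cdot\nabla]\widetilde{\Delta}_{j}\theta=:I+II+III,
\end{align*}
where $\widetilde{\Delta}_{j}=\Delta_{j-1}+\Delta_{j}+\Delta_{j+1}$. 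The decisive term is the low--high paraproduct $I$, which is the only one carrying cancellation.

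For each $j$ I would realize $\mathcal{R}_{1}$ acting on the frequency--$2^{j}$ localized factor as convolution against the kernel $\widetilde{K}_{j}$ of $\mathcal{R}_{1}\widetilde{\Delta}_{j}$; because $\mathcal{R}_{1}$ is homogeneous of degree zero, $\widetilde{K}_{j}(z)=2^{dj}\widetilde{K}(2^{j}z)$ with $\widetilde{K}$ rapidly decaying, so $\|\,|z|\widetilde{K}_{j}(z)\|_{L^{1}}\lesssim 2^{-j}$. Writing the summand of $I$ in kernel form gives $\int\widetilde{K}_{j}(z)\,[S_{j-1}u(x-z)-S_{j-1}u(x)]\cdot\nabla\Delta_{j}\theta(x-z)\,dz$; expanding the difference by the mean value theorem produces the factor $\nabla S_{j-1}u$ together with an extra $|z|$, and the gained $2^{-j}$ cancels the $2^{j}$ coming from $\nabla\Delta_{j}\theta$ (Bernstein). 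This yields the scale-invariant bound $\|[\mathcal{R}_{1},S_{j-1}u\cdot\nabla]\Delta_{j}\theta\|_{L^{p}}\lesssim\|\nabla S_{j-1}u\|_{L^{p}}\|\Delta_{j}\theta\|_{L^{\infty}}$. Since each summand is spectrally supported in an annulus $|\xi|\approx 2^{j}$, I can recombine the pieces for $1<p<\infty$ through Littlewood--Paley theory: estimating each $\Delta_{k}I$ by the finitely many overlapping summands and dominating the resulting convolutions by the Hardy--Littlewood maximal function of $\nabla u$ (bounded on $L^{p}$) gives $\|I\|_{L^{p}}\lesssim\|\nabla u\|_{L^{p}}\|\theta\|_{L^{\infty}}$. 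For $II$ and $III$ no cancellation is needed: I would discard the commutator, use that $\mathcal{R}_{1}$ is a Calderón--Zygmund operator bounded on $L^{p}$, and estimate each factor directly, the remainder $III$ being made summable by the frequency room created when $\nabla$ lands on the high-frequency factor of $u$. This proves part (1).

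For part (2), measured in $B^{0}_{\infty,r}$, the only structural change is that $\|\nabla u\|_{L^{p}}$ is no longer the natural quantity controlling the low-frequency velocity: in two dimensions $\nabla u=\nabla\nabla^{\perp}\Delta^{-1}\Omega$, and $\|\nabla u\|_{L^{\infty}}$ is \emph{not} controlled by $\|\Omega\|_{L^{\infty}}$ because of the logarithmic failure of the Calderón--Zygmund estimate at $L^{\infty}$. I would therefore keep the velocity in terms of the vorticity and absorb the low-frequency velocity factor into $\|\Omega\|_{L^{\infty}}+\|\Omega\|_{L^{p}}$, the $L^{p}$ piece being needed to handle the lowest block $\Delta_{-1}$ where $\Delta^{-1}$ is not a bounded multiplier, while the strictly positive regularity $\|\theta\|_{B^{\epsilon}_{\infty,r}}+\|\theta\|_{L^{p}}$ supplies the factor $2^{\epsilon j}$ required to sum the $j$-series in the $\ell^{r}$ norm defining $B^{0}_{\infty,r}$. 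Running the same three-term decomposition, now taking $\Delta_{k}$ and $L^{\infty}$ norms and collecting the geometric factors $2^{-\epsilon|k-j|}$, yields the stated bound.

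I expect the main obstacle to be the endpoint nature of term $I$: arranging the recombination of the spectrally localized commutator pieces so that the velocity appears in $L^{p}$ (respectively through $\Omega$) while the temperature is measured only in $L^{\infty}$ (respectively $B^{\epsilon}_{\infty,r}$), since a naive triangle-inequality sum over $j$ diverges and one must instead exploit almost-orthogonality together with the maximal-function bound.
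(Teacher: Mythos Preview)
The paper does not supply its own proof of Lemma~\ref{lem2.4}; the statement is quoted verbatim from the cited references \cite{HKR2011,Ye2023} and used as a black box. So there is no ``paper's own proof'' to compare against. Your paraproduct strategy---isolating the low--high piece $I$, gaining a factor $2^{-j}$ through the kernel mean-value argument on $[\mathcal{R}_{1},S_{j-1}u]$, and treating $II$, $III$ by direct size estimates plus the divergence-free structure---is exactly the method carried out in \cite{HKR2011}, so the approach is correct and standard.

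Two technical points deserve care when you write out the details. First, in part~(1) you say that for $II$ and $III$ you ``discard the commutator'' and appeal to $L^{p}$-boundedness of $\mathcal{R}_{1}$; but one half of the discarded commutator in $II$ contains $\nabla\mathcal{R}_{1}S_{j-1}\theta$, and $\mathcal{R}_{1}$ is \emph{not} bounded on $L^{\infty}$. You must instead observe that $\nabla\mathcal{R}_{1}\Delta_{k}$ (and the low block $\nabla\mathcal{R}_{1}\Delta_{-1}$) has an $L^{1}$ kernel with norm $\lesssim 2^{k}$, so $\|\nabla\mathcal{R}_{1}S_{j-1}\theta\|_{L^{\infty}}\lesssim 2^{j}\|\theta\|_{L^{\infty}}$ after summing; this is the step that makes the argument close. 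Second, your remark that ``a naive triangle-inequality sum over $j$ diverges'' is the right diagnosis for $I$ in the $L^{p}$ setting: the recombination genuinely requires the square-function characterisation of $L^{p}$ (or the maximal-function domination you mention), not merely spectral localisation. In \cite{HKR2011} this is handled by working in the smoother target $B^{0}_{p,r}$ for part~(2) and, for part~(1), by the Calder\'on commutator bound; make sure your write-up for part~(1) actually invokes the square function or Fefferman--Stein rather than summing $\|\Delta_{k}I\|_{L^{p}}$ in $\ell^{1}$.
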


\begin{lemma} \label{lem2.5} \cite{HKR2011} Let  $d$ be the space dimension. Let $u$ be a smooth divergence-free vector field. Then, for all $p\in[1,\infty]$ and $q\geq-1$,
\begin{align*}
\|[\Delta_{q},u\cdot\nabla]\theta\|_{L^{p}}\leq C\|\nabla u\|_{L^{p}}\|\theta\|_{B_{\infty,\infty}^{0}},
\end{align*}
for every smooth scalar function $\theta$.
\end{lemma}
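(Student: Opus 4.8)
The plan is to reduce the estimate to a collection of dyadic commutators and control each via the mean value theorem and Bernstein's inequalities (Lemma \ref{lem2.1}), invoking the divergence-free hypothesis only at the one place where a derivative must be shifted from $\theta$ onto $u$. First I would observe that, since $\partial_j$ is a Fourier multiplier and hence commutes with the frequency projection $\Delta_q$,
\begin{align*}
[\Delta_q,u\cdot\nabla]\theta=\Delta_q(u\cdot\nabla\theta)-u\cdot\nabla\Delta_q\theta=[\Delta_q,u_j]\partial_j\theta,
\end{align*}
so it suffices to control the commutator of the convolution operator $\Delta_q$ with multiplication by $u_j$. Applying Bony's paraproduct decomposition to the product $u_j\,\partial_j\theta$ and using the spectral support properties recorded after Definition \ref{define2.1}, I would split $[\Delta_q,u\cdot\nabla]\theta$ into three groups: a low--high part built from the paraproduct $\sum_k S_{k-1}u_j\,\partial_j\Delta_k\theta$, a high--low part $\sum_{|k-q|\le N_0}\Delta_q\big(S_{k-1}(\partial_j\theta)\,\Delta_k u_j\big)$, and a high--high (remainder) part $\sum_{k\ge q-N_1}\Delta_q\big(\Delta_k u_j\,\partial_j\tilde\Delta_k\theta\big)$, where $N_0,N_1$ are fixed integers dictated by the frequency supports and $\tilde\Delta_k=\Delta_{k-1}+\Delta_k+\Delta_{k+1}$. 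The subtracted term $u\cdot\nabla\Delta_q\theta$ will be absorbed into the low--high group after its diagonal contribution is isolated.

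For the low--high group I would use the kernel representation
\begin{align*}
[\Delta_q,S_{k-1}u_j]\,g(x)=\int \varphi_q(x-y)\big(S_{k-1}u_j(y)-S_{k-1}u_j(x)\big)g(y)\,\mathrm{d}y
\end{align*}
(with $h$ in place of $\varphi_q$ when $q=-1$) together with the first--order expansion $S_{k-1}u_j(y)-S_{k-1}u_j(x)=(y-x)\cdot\int_0^1\nabla S_{k-1}u_j(x+s(y-x))\,\mathrm{d}s$. Taking the $L^p$ norm in $x$, applying Minkowski's inequality and using $\int|z|\,|\varphi_q(z)|\,\mathrm{d}z\le C2^{-q}$ yields the classical bound $\|[\Delta_q,S_{k-1}u_j]g\|_{L^p}\le C2^{-q}\|\nabla u\|_{L^p}\|g\|_{L^\infty}$. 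With $g=\partial_j\Delta_k\theta$, Bernstein's inequality gives $\|\partial_j\Delta_k\theta\|_{L^\infty}\le C2^{k}\|\Delta_k\theta\|_{L^\infty}\le C2^{k}\|\theta\|_{B_{\infty,\infty}^0}$; since $|k-q|\le N_0$ the factor $2^{k-q}$ is bounded, and the finite sum over $k$ is controlled by $C\|\nabla u\|_{L^p}\|\theta\|_{B_{\infty,\infty}^0}$. This is exactly the step that explains why only $\|\nabla u\|_{L^p}$ and the weakest norm $\|\theta\|_{B_{\infty,\infty}^0}$ enter. Peeling the genuine commutators off the paraproduct leaves the diagonal term $S_{q-1}u_j\,\partial_j\Delta_q\theta$ (together with finitely many single--block terms controlled in the same way), which I would combine with the subtracted $u\cdot\nabla\Delta_q\theta$ into $-(\mathrm{Id}-S_{q-1})u_j\,\partial_j\Delta_q\theta=-\sum_{l\ge q-1}\Delta_l u_j\,\partial_j\Delta_q\theta$; estimating block-by-block with $\|\Delta_l u\|_{L^p}\le C2^{-l}\|\nabla u\|_{L^p}$ and $\|\partial_j\Delta_q\theta\|_{L^\infty}\le C2^{q}\|\theta\|_{B_{\infty,\infty}^0}$ and summing the geometric series $\sum_{l\ge q-1}2^{q-l}$ again gives $C\|\nabla u\|_{L^p}\|\theta\|_{B_{\infty,\infty}^0}$.

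The high--low group is estimated directly from $\|\Delta_q(S_{k-1}\partial_j\theta\,\Delta_k u_j)\|_{L^p}\le C\|S_{k-1}\partial_j\theta\|_{L^\infty}\|\Delta_k u\|_{L^p}$, where $\|S_{k-1}\partial_j\theta\|_{L^\infty}\le C\sum_{k'\le k-2}2^{k'}\|\Delta_{k'}\theta\|_{L^\infty}\le C2^{k}\|\theta\|_{B_{\infty,\infty}^0}$ and $\|\Delta_k u\|_{L^p}\le C2^{-k}\|\nabla u\|_{L^p}$, so the finite sum over $|k-q|\le N_0$ is again $C\|\nabla u\|_{L^p}\|\theta\|_{B_{\infty,\infty}^0}$. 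The main obstacle is the high--high remainder: the naive bound $\|\Delta_q(\Delta_k u_j\,\partial_j\tilde\Delta_k\theta)\|_{L^p}\le C\|\nabla u\|_{L^p}\|\theta\|_{B_{\infty,\infty}^0}$ carries no decay in $k$, so the sum over $k\ge q-N_1$ diverges. Here I would use the divergence-free condition to write $\Delta_k u_j\,\partial_j\tilde\Delta_k\theta=\partial_j(\Delta_k u_j\,\tilde\Delta_k\theta)$, since $\partial_j\Delta_k u_j=\Delta_k(\nabla\cdot u)=0$; then $\|\Delta_q\partial_j(\Delta_k u_j\,\tilde\Delta_k\theta)\|_{L^p}\le C2^{q}\|\Delta_k u\|_{L^p}\|\tilde\Delta_k\theta\|_{L^\infty}\le C2^{q-k}\|\nabla u\|_{L^p}\|\theta\|_{B_{\infty,\infty}^0}$, and the geometric series $\sum_{k\ge q-N_1}2^{q-k}$ converges. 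Collecting the three groups proves the claim. Finally I would note that all constants are uniform for $p\in[1,\infty]$ and $q\ge-1$, because Young's inequality, Bernstein's inequality and the $L^p$-boundedness of $\Delta_q$ and $S_{k-1}$ hold uniformly on this range, while the case $q=-1$ is treated by the same kernel argument with $h$ replacing $\varphi_q$ and requires no paraproduct splitting.
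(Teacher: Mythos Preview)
The paper does not prove Lemma~\ref{lem2.5}; it is quoted from \cite{HKR2011} without argument, so there is no in-paper proof to compare against. Your sketch follows exactly the paraproduct strategy used in \cite{HKR2011} and the surrounding literature: the mean-value/kernel bound on the low--high commutator, the direct Bernstein estimate on the high--low paraproduct, and the use of $\nabla\cdot u=0$ to put the remainder in divergence form and extract the summable factor $2^{q-k}$. In that sense you are reproducing the intended proof.

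Two places in your write-up need tightening. The inequality $\|\Delta_{k}u\|_{L^{p}}\le C\,2^{-k}\|\nabla u\|_{L^{p}}$, which you invoke both in the remainder and in the tail $\sum_{l\ge q-1}\Delta_{l}u_{j}\,\partial_{j}\Delta_{q}\theta$, is an inverse Bernstein estimate that fails for the low block $k=-1$; in the standard argument the $\Delta_{-1}u$ contributions are not estimated in isolation but are kept inside commutators of the form $[\Delta_{q},S_{k-1}u_{j}]$, where the mean-value step produces $\nabla S_{k-1}u$ and hence only $\|\nabla u\|_{L^{p}}$, so that a bare $\|\Delta_{-1}u\|_{L^{p}}$ never appears. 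Second, your closing remark that $q=-1$ ``requires no paraproduct splitting'' is not correct: applying the kernel identity directly to $[\Delta_{-1},u_{j}]\partial_{j}\theta$ yields a bound by $\|\nabla u\|_{L^{p}}\|\partial_{j}\theta\|_{L^{\infty}}$, and $\|\partial_{j}\theta\|_{L^{\infty}}$ is not controlled by $\|\theta\|_{B_{\infty,\infty}^{0}}$. One still needs the frequency decomposition of $\theta$ when $q=-1$, merely with $h$ in place of $\varphi_{q}$ as the convolution kernel. Both points are routine refinements rather than structural gaps.
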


Now, we recall the logarithmic Sobolev and Gronwall's inequalities (see \cite{Y2018}).
\begin{lemma}\label{lem2.6}
Let  $d$ be the space dimension and $s>\frac{d}{2}+1$. Then there exists two constants $C_{1}$ and $C_{2}$ such that
\begin{align*}
\|\nabla f\|_{L^{\infty}}\leq C_{1}\|\nabla f\|_{L^{2}}+C_{2}\|\nabla f\|_{B^{0}_{\infty,\infty}}\mathrm{log}(e+\|\Lambda^{s}f\|_{L^{2}}).
\end{align*}
\end{lemma}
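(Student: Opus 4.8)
The plan is to prove the inequality by a three-band Littlewood--Paley decomposition of $\nabla f$, balancing the low, intermediate and high frequencies against the three quantities appearing on the right-hand side. Starting from $\nabla f=\sum_{j\ge-1}\Delta_j\nabla f$, I would bound $\|\nabla f\|_{L^\infty}\le\sum_{j\ge-1}\|\Delta_j\nabla f\|_{L^\infty}$ and, for an integer threshold $N\ge 0$ to be fixed only at the very end, split the sum into the three ranges $j=-1$, $\,0\le j\le N$, and $j>N$. The low and intermediate ranges will be absorbed into $\|\nabla f\|_{L^2}$ and $\|\nabla f\|_{B^0_{\infty,\infty}}$ respectively, while the high range will be controlled by $\|\Lambda^s f\|_{L^2}$; the logarithm will emerge from optimizing $N$.

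For the low band $j=-1$ the Fourier support lies in a ball, so part (1) of Bernstein's inequality (Lemma \ref{lem2.1}) with $p=2$, $q=\infty$ gives $\|\Delta_{-1}\nabla f\|_{L^\infty}\le C\|\Delta_{-1}\nabla f\|_{L^2}\le C\|\nabla f\|_{L^2}$, which furnishes the term $C_1\|\nabla f\|_{L^2}$. For the intermediate band I would use only the definition of the Besov norm, namely $\|\Delta_j\nabla f\|_{L^\infty}\le\|\nabla f\|_{B^0_{\infty,\infty}}$ for each fixed $j$, so that
\begin{align*}
\sum_{0\le j\le N}\|\Delta_j\nabla f\|_{L^\infty}\le (N+1)\,\|\nabla f\|_{B^0_{\infty,\infty}}.
\end{align*}
This is the source of the eventual logarithmic factor, which enters simply as the number of retained dyadic blocks.

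The high band is the decisive one. Here Bernstein's inequality (Lemma \ref{lem2.1}, first with $p=2$, $q=\infty$ and then for the derivative) yields $\|\Delta_j\nabla f\|_{L^\infty}\le C\,2^{jd/2}\|\Delta_j\nabla f\|_{L^2}\le C\,2^{j(1+d/2)}\|\Delta_j f\|_{L^2}$. Setting $\delta:=s-1-\tfrac d2$, which is positive precisely because of the hypothesis $s>\tfrac d2+1$, I would factor this as $2^{-j\delta}\bigl(2^{js}\|\Delta_j f\|_{L^2}\bigr)$ and apply the Cauchy--Schwarz inequality in $j$, using that on high frequencies $\sum_{j>N}2^{2js}\|\Delta_j f\|_{L^2}^2\le C\|\Lambda^s f\|_{L^2}^2$ (the homogeneous and inhomogeneous $H^s$ norms are comparable away from the origin). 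This produces
\begin{align*}
\sum_{j>N}\|\Delta_j\nabla f\|_{L^\infty}\le C\Bigl(\sum_{j>N}2^{-2j\delta}\Bigr)^{1/2}\|\Lambda^s f\|_{L^2}\le C\,2^{-N\delta}\|\Lambda^s f\|_{L^2}.
\end{align*}

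Collecting the three estimates gives, for every $N\ge 0$,
\begin{align*}
\|\nabla f\|_{L^\infty}\le C\|\nabla f\|_{L^2}+C(N+1)\|\nabla f\|_{B^0_{\infty,\infty}}+C\,2^{-N\delta}\|\Lambda^s f\|_{L^2}.
\end{align*}
The final and most delicate step is to optimize in $N$: choosing $N$ as the least nonnegative integer with $2^{N\delta}\ge e+\|\Lambda^s f\|_{L^2}$ forces $N+1\le C\log(e+\|\Lambda^s f\|_{L^2})$, collapsing the middle term to $C_2\|\nabla f\|_{B^0_{\infty,\infty}}\log(e+\|\Lambda^s f\|_{L^2})$, while it renders the high-frequency remainder bounded and hence dominated by the right-hand side (using $\log(e+\cdot)\ge 1$; a threshold tuned to the ratio $\|\Lambda^s f\|_{L^2}/\|\nabla f\|_{B^0_{\infty,\infty}}$ absorbs it cleanly into that same term). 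I regard this balancing as the main obstacle: the threshold must be large enough to kill the $H^s$ tail yet small enough that the block count costs only a logarithm, and it is exactly the spectral gap $\delta=s-1-\tfrac d2>0$ that makes both requirements compatible.
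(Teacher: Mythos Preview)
The paper does not prove Lemma~\ref{lem2.6}; it is quoted from \cite{Y2018} without argument, so there is no in-paper proof to compare against. Your three-band Littlewood--Paley decomposition is exactly the standard route to such Brezis--Gallouet--Wainger estimates: the low-frequency block via Bernstein into $L^2$, the middle $N+1$ blocks bounded termwise by the $B^0_{\infty,\infty}$ norm, and the high-frequency tail controlled by $2^{-N\delta}\|\Lambda^{s}f\|_{L^2}$ through the gap $\delta=s-1-\tfrac{d}{2}>0$. All three estimates are correct, and you have identified precisely where the hypothesis $s>\tfrac{d}{2}+1$ enters.

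The only soft spot is the last paragraph. With the threshold $2^{N\delta}\sim e+\|\Lambda^{s}f\|_{L^2}$ the tail reduces to an absolute constant, which is not literally dominated by the displayed right-hand side under the scaling $f\mapsto\epsilon f$ as $\epsilon\to0$; your ratio-based alternative is the right repair and yields instead
\[
\|\nabla f\|_{L^\infty}\le C\|\nabla f\|_{L^2}+C\|\nabla f\|_{B^0_{\infty,\infty}}\log\Bigl(e+\frac{\|\Lambda^{s}f\|_{L^2}}{\|\nabla f\|_{B^0_{\infty,\infty}}}\Bigr),
\]
which is the form most references actually prove. In the paper's application (Proposition~\ref{Pros3.1}) the distinction is immaterial, since $\|\nabla u\|_{L^2}$ is already bounded a priori and the leftover constant is absorbed by the $C_1\|\nabla f\|_{L^2}$ term. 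Your proof plan is therefore sound for the purpose at hand; just be explicit that the inequality exactly as displayed needs either that a priori bound or the ratio inside the logarithm to be scale-consistent.
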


\begin{lemma}\label{lem2.7}
 Let $A$ and $B$ be two absolutely continuous and nonnegative functions on $(0,T)$ for given $T>0$ satisfying
\begin{align*}
A'(t)+B(t)\leq[m(t)+n(t)\mathrm{log}(A+B+e)](A+e)+f(t).
\end{align*}
for any $t\in(0,T)$, where $m(t)$, $n(t)$ and $f(t)$ are all nonnegative and integrable functions on $(0,T)$. Assume further that there are three constants $K\in[0,\infty)$, $\alpha\in[0,\infty)$ and $\beta\in[0,1)$, such that for any $t\in(0,T)$,
\begin{align*}
n(t)\leq K(A(t)+e)^{\alpha}(A(t)+B(t)+e)^{\beta}.
\end{align*}
Then the following estimate holds true
\begin{align}\label{2.1}
A(t)+\int^{t}_{0}B(s)ds\leq\tilde{C}(m,n,f,\alpha,\beta,K,t)<\infty,
\end{align}
for any $t\in(0,T)$. Especially, for the case $\alpha=0$, namely
\begin{align*}
n(t)\leq K(A(t)+B(t)+e)^{\beta},
\end{align*}
({\ref{2.1}}) still holds true.
\end{lemma}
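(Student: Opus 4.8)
The plan is to reduce the hypothesis to a \emph{linear} Gronwall inequality for the logarithm $G(t):=\log(A(t)+e)$, whence the conclusion follows from the assumed integrability of $m,n,f$. All the difficulty is concentrated in the single term $n\log(A+B+e)$, in which the dissipative quantity $B$ sits \emph{inside} the logarithm; the structural bound $n\le K(A+e)^{\alpha}(A+B+e)^{\beta}$ with $\beta<1$, used together with $n\in L^{1}(0,T)$, is precisely what neutralises it.

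First I would divide the differential inequality by $A+e\ge e$, using $A'/(A+e)=G'$ and $f/(A+e)\le f/e$, to get
\[
G'(t)+\frac{B}{A+e}\le m+\frac{f}{e}+n\log(A+B+e),
\]
where now $G\ge1$ and the genuine dissipation appears as $B/(A+e)$, the quantity I will trade against. Next comes the heart of the argument: I split $\log(A+B+e)\le\log(A+e)+\log(B+e)=G+\log(B+e)$, write $\log(B+e)=\log\frac{B+e}{A+e}+G$, and apply the Young (Legendre) inequality $ab\le a\log a-a+e^{b}$ with $a=n/\lambda$ and $e^{b}=\frac{B+e}{A+e}$, which produces, for any $\lambda>0$,
\[
n\log(A+B+e)\le 2nG+n\log\tfrac{n}{\lambda}+\lambda\,\frac{B}{A+e}+\lambda .
\]
The residual ``$n\log n$'' is then disposed of by the structural hypothesis: taking logarithms in $n\le K(A+e)^{\alpha}(A+B+e)^{\beta}$ and multiplying by $n\ge0$ gives $n\log n\le n\log K+\alpha\,nG+\beta\,n\log(A+B+e)$. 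Because $\beta<1$, the term $\beta\,n\log(A+B+e)$ can be absorbed back to the left and divided out by $1-\beta>0$, while the choice $\lambda=(1-\beta)/2$ makes the dissipative contribution $\frac{\lambda}{1-\beta}\frac{B}{A+e}=\tfrac12\frac{B}{A+e}$ absorbable into the $B/(A+e)$ already present. The net outcome is the clean linear bound
\[
G'(t)+\tfrac12\frac{B}{A+e}\le a(t)+C_{1}\,n(t)\,G(t),
\]
with $C_{1}=\frac{2+\alpha}{1-\beta}$ and $a(t)=m+\frac{f}{e}+C_{0}n+\tfrac12\in L^{1}(0,T)$.

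Finally I would discard the nonnegative dissipation term and apply the classical Gronwall lemma to $G'\le a+C_{1}nG$, obtaining $G(t)\le\bigl(G(0)+\|a\|_{L^{1}}\bigr)\exp\!\bigl(C_{1}\|n\|_{L^{1}}\bigr)=:M<\infty$ for every $t\in[0,T)$, hence $A(t)\le e^{M}-e$. Re-integrating the clean inequality and using $A+e\le e^{M}$ converts the bound on $\int_{0}^{t}\frac{B}{A+e}\,ds$ into a bound on $\int_{0}^{t}B\,ds$, which yields the stated estimate with an explicit $\tilde C(m,n,f,\alpha,\beta,K,t)$. The special case $\alpha=0$ needs no separate treatment, being just the instance $C_{1}=\frac{2}{1-\beta}$ of the same computation. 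The main obstacle, and the only place where care is required, is the duality step that extracts $B$ from the logarithm while preserving a purely \emph{logarithmic} (not polynomial) dependence on $A$: replacing $\log$ by a small power of its argument would instead generate a genuinely superlinear ODE for $A$ and thus spurious finite-time blow-up, so the logarithmic structure must be kept intact throughout.
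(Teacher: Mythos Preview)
The paper does not prove Lemma~\ref{lem2.7}; it is quoted without proof from \cite{Y2018}. Hence there is no ``paper's own proof'' to compare against, and the relevant question is simply whether your argument is sound.

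Your proof is correct. The reduction $G=\log(A+e)$, the splitting $\log(A+B+e)\le 2G+\log\frac{B+e}{A+e}$, and the Young--Legendre step $ab\le a\log a-a+e^{b}$ with $a=n/\lambda$, $e^{b}=(B+e)/(A+e)$ are all valid, and the absorption of $\beta\,n\log(A+B+e)$ using $\beta<1$ is exactly the mechanism that closes the estimate. A couple of minor points you should make explicit in a written-up version: (i) the inequality $\log(A+B+e)\le\log(A+e)+\log(B+e)$ is justified because $(A+e)(B+e)\ge A+B+e$ when $e>1$; (ii) the step $n\log n\le n\log K+\alpha nG+\beta n\log(A+B+e)$ follows directly from taking logarithms of the structural hypothesis and multiplying by $n\ge 0$, and the case $n=0$ is trivial by the convention $0\log 0=0$ --- no case distinction between $n\le 1$ and $n>1$ is actually needed. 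With those two remarks in place your derivation is airtight, and the final Gronwall step together with the recovery of $\int_{0}^{t}B\,ds$ from $\int_{0}^{t}B/(A+e)\,ds$ via $A+e\le e^{M}$ is routine.
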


The following form of positive lemma gives a specific lower bound, which makes it convenient for us to do the $L^{r}$ estimate for $r>2$.
\begin{lemma}\label{lem2.8}\cite{N2005}
Suppose $s\in[0,2]$, $f$ and $\Lambda^{s}f\in L^{p}$ for $p\geq2$. Then
\begin{align*}
\int_{\mathbb{R}^{2}}|f|^{p-2}f\Lambda^{s}f\mathrm{d}x\geq\frac{2}{p}\int_{\mathbb{R}^{2}}(\Lambda^{\frac{s}{2}}|f|^{\frac{p}{2}})^{2}\mathrm{d}x.
\end{align*}
\end{lemma}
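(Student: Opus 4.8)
The plan is to reduce the claimed functional inequality to an elementary pointwise inequality for two real numbers, via the singular-integral representation of the fractional Laplacian together with a symmetrization in the two integration variables. First I would dispose of the two endpoints. For $s=0$ the operator $\Lambda^{0}$ is the identity and both sides equal a multiple of $\int|f|^{p}\,dx$, so the claim is just $1\ge 2/p$, valid for $p\ge 2$. For $s=2$ we have $\Lambda^{2}=-\Delta$; integration by parts gives $\int|f|^{p-2}f(-\Delta f)\,dx=(p-1)\int|f|^{p-2}|\nabla f|^{2}\,dx$, whereas the right-hand side equals $\tfrac{p}{2}\int|f|^{p-2}|\nabla f|^{2}\,dx$, so again the inequality holds precisely because $p-1\ge p/2$ when $p\ge 2$.

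For the genuinely fractional range $s\in(0,2)$ I would use the pointwise formula
\[
\Lambda^{s}f(x)=c_{s}\,\mathrm{P.V.}\int_{\mathbb{R}^{2}}\frac{f(x)-f(y)}{|x-y|^{2+s}}\,dy,\qquad c_{s}>0.
\]
Multiplying by $|f(x)|^{p-2}f(x)$, integrating in $x$, and symmetrizing by averaging the integrand with the one obtained after the interchange $x\leftrightarrow y$ converts the left-hand side into
\[
\frac{c_{s}}{2}\iint\frac{\big(|f(x)|^{p-2}f(x)-|f(y)|^{p-2}f(y)\big)\big(f(x)-f(y)\big)}{|x-y|^{2+s}}\,dy\,dx.
\]
Applying the same representation to $g:=|f|^{p/2}$ and using self-adjointness of $\Lambda^{s/2}$ together with $\Lambda^{s/2}\Lambda^{s/2}=\Lambda^{s}$, the right-hand side of the lemma becomes
\[
\frac{2}{p}\int_{\mathbb{R}^{2}}(\Lambda^{s/2}g)^{2}\,dx=\frac{2}{p}\int g\,\Lambda^{s}g\,dx=\frac{2}{p}\cdot\frac{c_{s}}{2}\iint\frac{\big(|f(x)|^{p/2}-|f(y)|^{p/2}\big)^{2}}{|x-y|^{2+s}}\,dy\,dx.
\]
Since the positive constant $c_{s}$ and the kernel $|x-y|^{-(2+s)}$ are common to both sides, the whole statement follows from the scalar inequality
\[
\big(|a|^{p-2}a-|b|^{p-2}b\big)(a-b)\ \ge\ \frac{2}{p}\big(|a|^{p/2}-|b|^{p/2}\big)^{2}\qquad\text{for all }a,b\in\mathbb{R}.
\]

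To establish this pointwise inequality I would split on the signs of $a$ and $b$. When $a,b$ share a sign (say $0\le b\le a$), I would write each difference as an integral, $a^{p-1}-b^{p-1}=(p-1)\int_{b}^{a}t^{p-2}\,dt$ and $a^{p/2}-b^{p/2}=\tfrac{p}{2}\int_{b}^{a}t^{p/2-1}\,dt$, and apply Cauchy--Schwarz to the latter against the constant $1$, giving $(a^{p/2}-b^{p/2})^{2}\le\tfrac{p^{2}}{4}(a-b)\int_{b}^{a}t^{p-2}\,dt$; combining these reduces the claim to $p-1\ge p/2$, i.e. $p\ge 2$. When $a$ and $b$ have opposite signs I would expand both sides: the left-hand side $(|a|^{p-1}+|b|^{p-1})(|a|+|b|)$ dominates $|a|^{p}+|b|^{p}$, while the right-hand side is bounded above by $\tfrac{2}{p}(|a|^{p}+|b|^{p})\le|a|^{p}+|b|^{p}$, since the cross term $-\tfrac{4}{p}|a|^{p/2}|b|^{p/2}$ is negative and $2/p\le 1$. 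The degenerate cases $a=0$ or $b=0$ are immediate.

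The main obstacle I anticipate is not the algebra but the rigorous justification of the singular-integral manipulations: the representation of $\Lambda^{s}$ as a principal-value integral and, especially, the symmetrization step require controlling the $x=y$ singularity and the behavior at infinity under only the hypotheses $f,\Lambda^{s}f\in L^{p}$. I would handle this by first proving the identity for $f$ in a dense class (say $\mathcal{S}(\mathbb{R}^{2})$, or smooth compactly supported functions), where all double integrals converge absolutely after introducing a cutoff $|x-y|\ge\varepsilon$ and letting $\varepsilon\to 0$, and then passing to the general case by approximation, using the $L^{p}$ bounds on $f$ and $\Lambda^{s}f$ to guarantee stability of both sides under the limit.
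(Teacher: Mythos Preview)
Your argument is correct. Note, however, that the paper does not supply its own proof of this lemma: it is quoted directly from the cited reference \cite{N2005} (Ju, \emph{Comm.\ Math.\ Phys.}\ 255 (2005), 161--181) and used as a black box. There is therefore nothing in the paper to compare against beyond the citation.

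For what it is worth, your route---the singular-integral representation of $\Lambda^{s}$ for $0<s<2$, symmetrization in $(x,y)$, and reduction to the scalar inequality
\[
\bigl(|a|^{p-2}a-|b|^{p-2}b\bigr)(a-b)\ \ge\ \frac{2}{p}\bigl(|a|^{p/2}-|b|^{p/2}\bigr)^{2}
\]
---is exactly the strategy used in the original reference (and in the closely related C\'ordoba--C\'ordoba argument). Your treatment of the same-sign case via the integral representations and Cauchy--Schwarz, and of the opposite-sign case via the crude bounds $(|a|^{p-1}+|b|^{p-1})(|a|+|b|)\ge|a|^{p}+|b|^{p}$ and $\tfrac{2}{p}(|a|^{p/2}-|b|^{p/2})^{2}\le|a|^{p}+|b|^{p}$, are both valid. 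The endpoint checks $s=0$ and $s=2$ are fine as well. The only point requiring care, which you already flag, is the density/approximation argument needed to justify the double-integral manipulations under the bare hypotheses $f,\Lambda^{s}f\in L^{p}$; this is standard and handled exactly as you indicate.
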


Finally, we give a classic $L^{p}$-$L^{q}$ type estimate for the heat operator as follows.
\begin{lemma}\label{lem2.9}\cite{SS2005}
Let  $d$ be the space dimension, $s>0$ and $1\leq p\leq q\leq\infty$. For any $t>0$, we have
\begin{align*}
\|\nabla^{s}e^{-\Delta t}f\|_{L^{q}}\leq Ct^{-\frac{s}{2}}t^{-\frac{d}{2}(\frac{1}{p}-\frac{1}{q})}\|f\|_{L^{p}}.
\end{align*}
\end{lemma}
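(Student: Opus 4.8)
The plan is to realize $\nabla^{s}e^{-\Delta t}$ as convolution against an explicit kernel, reduce to the case $t=1$ by a scaling (dimensional-analysis) argument, and then apply Young's convolution inequality; the only genuine work will be to verify that the resulting fixed profile kernel lies in every $L^{r}$.

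First I would pass to the Fourier side, where the symbol of $\nabla^{s}e^{-\Delta t}$ is $|\xi|^{s}e^{-t|\xi|^{2}}$ (interpreting $\nabla^{s}=\Lambda^{s}$ with $\Lambda=(-\Delta)^{1/2}$; for integer $s$ the componentwise derivatives are treated identically, the symbol then being a homogeneous degree-$s$ polynomial times the Gaussian). Thus $\nabla^{s}e^{-\Delta t}f=K_{t}^{(s)}\ast f$ with $K_{t}^{(s)}=\mathcal{F}^{-1}\big[\,|\xi|^{s}e^{-t|\xi|^{2}}\,\big]$. Next I would exploit scaling: setting $\eta=\sqrt{t}\,\xi$ gives $|\xi|^{s}e^{-t|\xi|^{2}}=t^{-s/2}\Phi_{s}(\sqrt{t}\,\xi)$ with the fixed profile $\Phi_{s}(\eta)=|\eta|^{s}e^{-|\eta|^{2}}$. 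Since $\mathcal{F}^{-1}[\Phi_{s}(\sqrt{t}\,\cdot)](x)=t^{-d/2}\phi_{s}(x/\sqrt{t})$ where $\phi_{s}=\mathcal{F}^{-1}\Phi_{s}$, one gets $K_{t}^{(s)}(x)=t^{-s/2}t^{-d/2}\phi_{s}(x/\sqrt{t})$, and a change of variables yields
\[
\|K_{t}^{(s)}\|_{L^{r}}=t^{-s/2}\,t^{-\frac{d}{2}\left(1-\frac{1}{r}\right)}\|\phi_{s}\|_{L^{r}},\qquad 1\le r\le\infty.
\]

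I would then apply Young's inequality $\|K_{t}^{(s)}\ast f\|_{L^{q}}\le\|K_{t}^{(s)}\|_{L^{r}}\|f\|_{L^{p}}$ under the exponent relation $1+\frac{1}{q}=\frac{1}{r}+\frac{1}{p}$, that is $1-\frac{1}{r}=\frac{1}{p}-\frac{1}{q}$. Because $1\le p\le q\le\infty$ forces $0\le\frac{1}{p}-\frac{1}{q}\le1$, the conjugate exponent $r$ ranges over exactly $[1,\infty]$, so no case is excluded. Substituting the scaling identity then produces $\|K_{t}^{(s)}\|_{L^{r}}=C\,t^{-s/2}t^{-\frac{d}{2}(\frac{1}{p}-\frac{1}{q})}$ with $C=\|\phi_{s}\|_{L^{r}}$, which is precisely the asserted bound. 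One checks the two endpoints are consistent: $p=q$ gives $r=1$ and the factor $t^{-s/2}$, while $p=1,\,q=\infty$ gives $r=\infty$ and $t^{-s/2-d/2}$.

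The crux — the single step requiring care — is the uniform finiteness $\|\phi_{s}\|_{L^{r}}<\infty$ for all $r\in[1,\infty]$. Here I would argue that $\Phi_{s}(\eta)=|\eta|^{s}e^{-|\eta|^{2}}$ is continuous and decays faster than any polynomial, so $\phi_{s}\in C^{\infty}$, and in particular $\phi_{s}\in L^{\infty}$, which handles the endpoint $r=\infty$ and the behaviour on $\{|x|\le1\}$. The decay of $\phi_{s}$ at infinity is governed by the regularity of $\Phi_{s}$ at the frequency origin: for non-integer $s$ the factor $|\eta|^{s}$ is only finitely smooth there, and by the standard computation of the Fourier transform of a homogeneous symbol this produces the tail $|\phi_{s}(x)|\lesssim|x|^{-(d+s)}$ as $|x|\to\infty$ (for even integer $s$, $\Phi_{s}$ is Schwartz and $\phi_{s}$ decays rapidly, which is strictly more favourable). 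Since $s>0$ one has $(d+s)r>d$ for every $r\ge1$, so $\phi_{s}$ is integrable against $|x|^{-(d+s)r}$ on $\{|x|\ge1\}$; combined with boundedness near the origin this gives $\phi_{s}\in L^{r}$ for all $1\le r\le\infty$. This is exactly where the hypothesis $s>0$ enters, and it closes the argument.
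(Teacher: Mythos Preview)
The paper does not supply a proof of this lemma; it merely records the statement and cites \cite{SS2005}. Your argument---realizing $\nabla^{s}e^{t\Delta}$ as convolution with a kernel, using dilation to reduce to the fixed profile $\phi_{s}=\mathcal{F}^{-1}\bigl[\,|\eta|^{s}e^{-|\eta|^{2}}\,\bigr]$, and then invoking Young's inequality with the exponent relation $1-\tfrac{1}{r}=\tfrac{1}{p}-\tfrac{1}{q}$---is exactly the standard proof, and the scaling and exponent bookkeeping are correct.

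The one step you pass over quickly is the tail bound $|\phi_{s}(x)|\lesssim|x|^{-(d+s)}$. The heuristic ``Fourier transform of a homogeneous symbol'' is right in spirit, but since $\mathcal{F}^{-1}[\,|\eta|^{s}\,]$ for $s>0$ is a genuine distribution rather than a locally integrable function, the implied convolution-with-a-Gaussian argument needs a word of justification. A clean way to make it rigorous is to split $\Phi_{s}=\chi\Phi_{s}+(1-\chi)\Phi_{s}$ with $\chi$ a smooth cutoff near the origin: the second piece lies in $\mathcal{S}(\mathbb{R}^{d})$ and contributes rapid decay, while the first is compactly supported and agrees with $|\eta|^{s}$ near $0$, so the $|x|^{-(d+s)}$ decay of its inverse transform follows from the standard estimate for symbols that are $C^{\infty}$ away from the origin and homogeneous of degree $s$ there. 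With this in hand $\phi_{s}\in L^{1}$ holds precisely because $s>0$, which is where that hypothesis enters, and the proof closes as you describe.
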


\vskip .3in
\section{The proof of Theorem \ref{thm1.1}}
\setcounter{equation}{0}
\vskip .1in
\subsection{The estimate of $\|\theta\|_{L^{\infty}}$}\

In this subsection, we first establish the global $L^{2}$-bound for $(u,\omega,\theta)$, and then the estimate of $\|\theta\|_{L^{\infty}}$.
Precisely, we have the following lemma.
\begin{lemma}\label{lem3.1}
Let $(u_{0},\omega_{0},\theta_{0})$ satisfy the assumptions stated in Theorem {\ref{thm1.1}}. Then the corresponding solution $(u,\omega,\theta)$ of ({\ref{1.3}}) obeys the following bounds, for any $0<t<T$
\begin{align}\label{3.1}
\|u\|^{2}_{L^{2}}+\|\omega\|^{2}_{L^{2}}+\|\theta\|^{2}_{L^{2}}+\int^{t}_{0}\|\nabla\omega(\cdot,\tau)\|^{2}_{L^{2}}\mathrm{d}\tau
+\int^{t}_{0}\|\Lambda^{\frac{1}{2}}\theta(\cdot,\tau)\|^{2}_{L^{2}}\mathrm{d}\tau
\leq\|(u_{0},\omega_{0},\theta_{0})\|_{L^{2}}^{2}e^{CT},
\end{align}
\begin{align}\label{3.2}
\|\theta\|_{L^{\infty}}\leq Ce^{CT}.
\end{align}
\end{lemma}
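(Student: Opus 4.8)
The plan is to prove the two bounds in the natural order: first the $L^2$-estimate \eqref{3.1}, then the $L^\infty$-estimate \eqref{3.2} of the temperature. For \eqref{3.1}, I would take the standard energy approach. Multiplying the $u$-equation by $u$, the $\omega$-equation by $\omega$, and the $\theta$-equation by $\theta$, integrating over $\mathbb{R}^2$, and using $\nabla\cdot u=0$ to kill the transport terms and the pressure term, one obtains
\begin{align*}
\frac{1}{2}\frac{d}{dt}\big(\|u\|_{L^2}^2+\|\omega\|_{L^2}^2+\|\theta\|_{L^2}^2\big)+\|\nabla\omega\|_{L^2}^2+2\chi\|\omega\|_{L^2}^2+\|\Lambda^{1/2}\theta\|_{L^2}^2 = \int \big(2\chi\nabla\times\omega\big)\cdot u+\int\theta\,u\cdot e_2+\int\big(2\chi\nabla\times u\big)\omega+\int\theta\,u\cdot e_2.
\end{align*}
The coupling terms $\int(\nabla\times\omega)\cdot u$ and $\int(\nabla\times u)\,\omega$ are, after integration by parts, both equal to $\int \Omega\,\omega$ up to sign and can be absorbed into $\|\nabla\omega\|_{L^2}^2$ and $\|u\|_{L^2}^2$ by Young's inequality; the buoyancy/convection terms $\int\theta\,u\cdot e_2$ are bounded by $\tfrac{1}{2}(\|u\|_{L^2}^2+\|\theta\|_{L^2}^2)$. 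Gronwall's inequality then gives \eqref{3.1}. This step is entirely routine.

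The estimate \eqref{3.2} is the substantive part. Following Ye \cite{Y2017,Ye2023}, I would exploit the special structure of the $\theta$-equation: although the critical dissipation $\Lambda\theta$ does not by itself yield an $L^\infty$-bound via De Giorgi–Nash (as noted in the introduction), the combined quantity is designed precisely to circumvent this. The idea is to work with $\Gamma=\Omega+\mathcal{R}_1\theta+\omega$ and note that $\Gamma$ and $\omega$ satisfy forced transport(-diffusion) equations whose forcing is controlled; one does $L^r$-estimates of $\Gamma$ and $\omega$ for $4<r<\infty$ using Lemma \ref{lem2.8} to produce a genuine dissipative lower bound, and simultaneously a $\dot H^k$-estimate of $\theta$ with $\tfrac12<k\le\tfrac{r-2}{r}$. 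Since $\Omega=\Gamma-\mathcal{R}_1\theta-\omega$ and $\mathcal{R}_1$ is bounded on $L^r$, one closes a coupled system of differential inequalities for $\|\Gamma\|_{L^r}$, $\|\omega\|_{L^r}$ and $\|\Lambda^k\theta\|_{L^2}$; the key commutator $[\mathcal{R}_1,u\cdot\nabla]\theta$ is handled by Lemma \ref{lem2.4}(1). A Sobolev embedding $\dot H^k\cap L^2\hookrightarrow L^\infty$ (valid since $k>\tfrac{d}{2}=1$... actually $k\le 1$ here, so one uses instead the interpolation $L^\infty\hookleftarrow \dot H^{k}$ combined with $\dot H^{1/2}$ already controlled by \eqref{3.1}, or a direct Besov argument) then converts the $\dot H^k$-bound into the $L^\infty$-bound $\|\theta\|_{L^\infty}\le Ce^{CT}$.

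The main obstacle is the absence of any a priori $L^\infty$-control of $\theta$ at the critical level, which forces the whole $\Gamma$-$\omega$-$\theta$ coupling: one must choose the pair $(k,r)$ so that all the nonlinear terms — particularly those mixing the $L^r$-norms of $\Gamma,\omega$ with the $\dot H^k$-norm of $\theta$ through the convection and commutator terms — are subcritical and can be absorbed into the dissipation, while keeping $k>\tfrac12$ so that $\Lambda^k\theta$ together with the already-available $\Lambda^{1/2}\theta\in L^2_tL^2_x$ controls $\theta$ in $L^\infty$. Making these exponent constraints ($\tfrac12<k\le\tfrac{r-2}{r}$, $4<r<\infty$) mutually consistent and verifying that the resulting Gronwall argument genuinely closes (rather than merely reducing one unknown bound to another, the pitfall flagged for the methods of \cite{WX2012,Y2017}) is the delicate point; everything else is bookkeeping with the Bernstein, commutator, and positivity lemmas collected in Section 2.
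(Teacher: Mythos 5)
Your $L^2$ step and the overall skeleton (the combined quantity $\Gamma=\Omega+\mathcal{R}_1\theta+\omega$, coupled $L^r$ estimates of $\Gamma,\omega$ and a $\dot H^k$ estimate of $\theta$ with $\tfrac12<k\le\tfrac{r-2}{r}$) match the paper, but there are two genuine gaps in how you close the argument. First, the circularity you yourself flag is not resolved: the commutator bound from Lemma \ref{lem2.4}(1) controls $\|[\mathcal{R}_1,u\cdot\nabla]\theta\|_{L^r}$ by $\|\nabla u\|_{L^r}\,\|\theta\|_{L^{\infty}}$, so the coefficient of your coupled differential inequality contains exactly the quantity $\|\theta\|_{L^\infty}$ you are trying to prove bounded (see \eqref{3.12}, \eqref{3.16}). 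The paper breaks this loop with a device you omit entirely: an $L^p$ energy estimate of $\theta$ with explicit $p$-dependence, $\|\theta\|_{L^p}\le C(\|\theta_0\|_{L^p}+Cp)e^{CT}$ (via Lemma \ref{lem2.8}, see \eqref{3.6}), which feeds a high--low frequency splitting giving the logarithmic bound $\|\theta\|_{L^\infty}\le C+C\log\bigl(e+\|\Lambda^{k+\frac12}\theta\|_{L^2}^2\bigr)$, i.e.\ $\|\theta\|_{L^\infty}$ is controlled by the logarithm of the dissipation term appearing on the left of \eqref{3.16}; only then does the logarithmic Gronwall inequality of Lemma \ref{lem2.7} close the system. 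Without this (or an equivalent substitute) your Gronwall argument does not close, which is precisely the pitfall you mention but do not address.

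Second, your final passage from the $\dot H^k$ bound to $\|\theta\|_{L^\infty}\le Ce^{CT}$ fails as stated: since $k\le\frac{r-2}{r}<1$, $\dot H^{k}(\mathbb{R}^2)\cap L^2$ does not embed into $L^\infty$, and interpolating with the $\dot H^{1/2}$ information from \eqref{3.1} (which is only $L^2$ in time) cannot produce a bound uniform in time; at best the dissipation $\Lambda^{k+\frac12}\theta\in L^2_tL^2_x$ with $k+\tfrac12>1$ gives $\theta\in L^2_tL^\infty_x$. The paper's route is different and you should adopt it: once $\|\Gamma\|_{L^r}+\|\omega\|_{L^r}$ is bounded, interpolation $\|u\|_{L^\infty}\le C\|u\|_{L^2}^{\frac{r-2}{2r-2}}\|\Omega\|_{L^r}^{\frac{r}{2r-2}}$ yields $\int_0^t\|u\|_{L^\infty}\,\mathrm{d}\tau\le Ce^{CT}$, and then the maximum-principle structure of the transport--diffusion equation $\partial_t\theta+u\cdot\nabla\theta+\Lambda\theta=u_2$ gives $\|\theta(t)\|_{L^\infty}\le \|\theta_0\|_{L^\infty}+\int_0^t\|u\|_{L^\infty}\,\mathrm{d}\tau$, which is \eqref{3.2}.
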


\begin{proof} First, we dot ({\ref{1.3}}) with $(u,\omega,\theta)$. By the H\"{o}lder and Young's inequalities, one has
\begin{align*}
&\frac{1}{2}\frac{\mathrm{d}}{\mathrm{d}t}(\|u\|_{L^{2}}^{2}+\|\omega\|_{L^{2}}^{2}+\|\theta\|_{L^{2}}^{2})
+\|\nabla\omega\|_{L^{2}}^{2}+2\|\omega\|_{L^{2}}^{2}+\|\Lambda^{\frac{1}{2}}\theta\|_{L^{2}}^{2}\\
&=\int_{\mathbb{R}^{2}}(\nabla\times\omega)\cdot u\mathrm{d}x+\int_{\mathbb{R}^{2}}e_{2}\theta\cdot u\mathrm{d}x
+\int_{\mathbb{R}^{2}}(\nabla\times u)\cdot\omega\mathrm{d}x+\int_{\mathbb{R}^{2}}u\cdot e_{2}\theta\mathrm{d}x\\
&\leq C\|u\|_{L^{2}}\|\nabla\omega\|_{L^{2}}+C\|u\|_{L^{2}}\|\theta\|_{L^{2}}\\
&\leq C\|u\|_{L^{2}}^{2}+C\|\theta\|_{L^{2}}^{2}+\frac{1}{2}\|\nabla\omega\|_{L^{2}}^{2}.
\end{align*}
By Gronwall's inequality, ({\ref{3.1}}) is proved.

Now, we estimate $\|\theta\|_{L^{p}}$ for $2<p<\infty$. Doing the $L^{p}$ estimate of $\theta$, we have
\begin{align}\label{3.3}
\frac{1}{p}\frac{\mathrm{d}}{\mathrm{d}t}\|\theta\|_{L^{p}}^{p}+
\int_{\mathbb{R}^{2}}\Lambda\theta\cdot(|\theta|^{p-2}\theta)\mathrm{d}x
=\int_{\mathbb{R}^{2}}u_{2}\cdot(|\theta|^{p-2}\theta)\mathrm{d}x.
\end{align}
By Lemma \ref{lem2.8}, one has
\begin{align}\label{3.4}
\int_{\mathbb{R}^{2}}\Lambda\theta\cdot(|\theta|^{p-2}\theta)\mathrm{d}x\geq
\frac{2}{p}\int_{\mathbb{R}^{2}}(\Lambda^{\frac{1}{2}}\theta^{\frac{p}{2}})^{2}\mathrm{d}x=
\frac{2}{p}\|\Lambda^{\frac{1}{2}}|\theta|^{\frac{p}{2}}\|_{L^{2}}^{2}.
\end{align}
By the Interpolation and Young's inequalities, we have
\begin{align}\label{3.5}
\nonumber |\int_{\mathbb{R}^{2}}u_{2}\cdot(|\theta|^{p-2}\theta)\mathrm{d}x|\leq&C\|u\|_{L^{2}}
\||\theta|^{\frac{p}{2}}\|_{L^{4(p-1)/p}}^{\frac{2(p-1)}{p}}\\
\nonumber \leq&C\|u\|_{L^{2}}\||\theta|^{\frac{p}{2}}\|_{L^{2}}^{\frac{2}{p}}
\|\Lambda^{\frac{1}{2}}|\theta|^{\frac{p}{2}}\|_{L^{2}}^{\frac{2(p-2)}{p}}\\
\leq&C\|u\|_{L^{2}}\|\theta\|_{L^{p}}
\|\Lambda^{\frac{1}{2}}|\theta|^{\frac{p}{2}}\|_{L^{2}}^{\frac{2(p-2)}{p}}\\
\nonumber \leq&\frac{1}{p}\|\Lambda^{\frac{1}{2}}|\theta|^{\frac{p}{2}}\|_{L^{2}}^{2}+C\|\theta\|_{L^{p}}^{p}
+C\frac{1}{p}(Cp)^{p}.
\end{align}
Inserting (\ref{3.4}) and (\ref{3.5}) into (\ref{3.3}), one has
\begin{align*}
\frac{\mathrm{d}}{\mathrm{d}t}\|\theta\|_{L^{p}}^{p}+\|\Lambda^{\frac{1}{2}}|\theta|^{\frac{p}{2}}\|_{L^{2}}^{2}
\leq C\|\theta\|_{L^{p}}^{p}+C(Cp)^{p}.
\end{align*}
By Gronwall's inequality, we have
\begin{align}\label{3.6}
\|\theta\|_{L^{p}}^{p}+\int_{0}^{t}\|\Lambda^{\frac{1}{2}}|\theta|^{\frac{p}{2}}\|_{L^{2}}^{2}\mathrm{d}
\tau\leq C(\|\theta_{0}\|_{L^{p}}^{p}+(Cp)^{p})e^{CT}.
\end{align}

Next, we estimate $\|\Lambda^{k}\theta\|_{L^{2}}$, $\|\Gamma\|_{L^{r}}$ and $\|\omega\|_{L^{r}}$ for $\frac{1}{2}<k\leq\frac{r-2}{r}$ and $4<r<\infty$.
Applying the operator $\Lambda^{k}$ to the temperature equation, and dotting $\Lambda^{k}\theta$ on the both sides and integrating over $\mathbb{R}^{2}$, we can derive that
\begin{align}\label{3.7}
\frac{1}{2}\frac{\mathrm{d}}{\mathrm{d}t}\|\Lambda^{k}\theta\|_{L^{2}}^{2}+\|\Lambda^{k+\frac{1}{2}}\theta\|_{L^{2}}^{2}
=-\int_{\mathbb{R}^{2}}\Lambda^{k}(u\cdot\nabla\theta)\Lambda^{k}\theta\mathrm{d}x+
\int_{\mathbb{R}^{2}}\Lambda^{k}u_{2}\Lambda^{k}\theta\mathrm{d}x.
\end{align}
By Lemmas \ref{lem2.1} and \ref{lem2.3}, one has
\begin{align}\label{3.8}
\nonumber |\int_{\mathbb{R}^{2}}\Lambda^{k}(u\cdot\nabla\theta)\Lambda^{k}\theta\mathrm{d}x|
=&|\int_{\mathbb{R}^{2}}[\Lambda^{k},u\cdot\nabla]\theta\Lambda^{k}\theta\mathrm{d}x|\\
\nonumber \leq&C\|[\Lambda^{k},u\cdot\nabla]\theta\|_{H^{-\frac{1}{2}}}\|\Lambda^{k}\theta\|_{H^{\frac{1}{2}}}\\
\nonumber \leq&C\|[\Lambda^{k},u\cdot\nabla]\theta\|_{B_{2,2}^{-\frac{1}{2}}}(\|\Lambda^{k+\frac{1}{2}}\theta\|_{L^{2}}
+\|\Lambda^{k}\theta\|_{L^{2}})\\
\nonumber
\leq&C(\|\nabla u\|_{L^{r}}\|\theta\|_{B_{\frac{2r}{r-2},2}^{k-\frac{1}{2}}}+
\|u\|_{L^{2}}\|\theta\|_{L^{2}})(\|\Lambda^{k+\frac{1}{2}}\theta\|_{L^{2}}+\|\Lambda^{k}\theta\|_{L^{2}})\\
\nonumber \leq&C(\|\Omega\|_{L^{r}}\|\theta\|_{H^{{\frac{1}{2}}}}+
\|u\|_{L^{2}}\|\theta\|_{L^{2}})(\|\Lambda^{k+\frac{1}{2}}\theta\|_{L^{2}}+\|\Lambda^{k}\theta\|_{L^{2}})\\
\leq&C\|\Lambda^{\frac{1}{2}}\theta\|_{L^{2}}^{2}\|\Omega\|_{L^{r}}^{2}+C+
\frac{1}{2}\|\Lambda^{k+\frac{1}{2}}\theta\|_{L^{2}}^{2}+C\|\Lambda^{k}\theta\|_{L^{2}}^{2},
\end{align}
where we used $k\leq\frac{r-2}{r}$.
\begin{align}\label{3.9}
\nonumber |\int_{\mathbb{R}^{2}}\Lambda^{k}u_{2}\Lambda^{k}\theta\mathrm{d}x|
\leq&C\|\Lambda^{k}u_{2}\|_{L^{2}}\|\Lambda^{k}\theta\|_{L^{2}}\\
\nonumber \leq&C\|u\|_{L^{2}}^{1-\frac{kr}{2r-2}}\|\Omega\|_{L^{r}}^{\frac{kr}{2r-2}}\|\Lambda^{k}\theta\|_{L^{2}}\\
\leq&C(\|\Lambda^{k}\theta\|_{L^{2}}^{2}+\|\Omega\|_{L^{r}}^{2})+C.
\end{align}
Inserting (\ref{3.8}) and (\ref{3.9}) into (\ref{3.7}), one has
\begin{align}\label{3.10}
\nonumber \frac{\mathrm{d}}{\mathrm{d}t}\|\Lambda^{k}\theta\|_{L^{2}}^{2}+\|\Lambda^{k+\frac{1}{2}}\theta\|_{L^{2}}^{2}
\leq&C(\|\Lambda^{\frac{1}{2}}\theta\|_{L^{2}}^{2}+1)(\|\Lambda^{k}\theta\|_{L^{2}}^{2}+\|\Omega\|_{L^{r}}^{2})+C\\
\leq&C(\|\Lambda^{\frac{1}{2}}\theta\|_{L^{2}}^{2}+1)
(\|\Lambda^{k}\theta\|_{L^{2}}^{2}+\|\Gamma\|_{L^{r}}^{2}+\|\omega\|_{L^{r}}^{2}+1).
\end{align}
To close the estimate, we need to estimate the $L^{r}$ norms of $\Gamma$ and $\omega$.
Applying the operator $\mathcal{R}_{1}$ to the temperature equation, we have
\begin{align*}
\partial_{t}\mathcal{R}_{1}\theta+u\cdot\nabla\mathcal{R}_{1}\theta+\Lambda\mathcal{R}_{1}\theta=
-[\mathcal{R}_{1},u\cdot\nabla]\theta+\mathcal{R}_{1}u_{2}.
\end{align*}
Combining above equation, vorticity equation ({\ref{1.5}}) with the equation of $\omega$, we can get the equation of $\Gamma:=\Omega+\mathcal{R}_{1}\theta+\omega$ as
\begin{align*}
\partial_{t}\Gamma+u\cdot\nabla\Gamma=-[\mathcal{R}_{1},u\cdot\nabla]\theta+\mathcal{R}_{1}u_{2}+\Omega-2\omega,
\end{align*}
where $[\mathcal{R}_{1},u\cdot\nabla]\theta\triangleq\mathcal{R}_{1}(u\cdot\nabla\theta)-u\cdot\nabla\mathcal{R}_{1}\theta$ denotes the commutator.
Doing the $L^{r}$ estimate of $\Gamma$, we have
\begin{align}\label{3.11}
\frac{\mathrm{d}}{\mathrm{d}t}\|\Gamma\|_{L^{r}}^{2}\leq\|[\mathcal{R}_{1},u\cdot\nabla]\theta\|_{L^{r}}\|\Gamma\|_{L^{r}}
+(\|\mathcal{R}_{1}u_{2}\|_{L^{r}}+\|\Omega\|_{L^{r}}+2\|\omega\|_{L^{r}})\|\Gamma\|_{L^{r}}\triangleq I_{1}+I_{2}.
\end{align}
By Lemma \ref{lem2.4}, we have
\begin{align}\label{3.12}
\nonumber |I_{1}|\leq&C\|[\mathcal{R}_{1},u\cdot\nabla]\theta\|_{L^{r}}\|\Gamma\|_{L^{r}}\\
\nonumber \leq&C\|\nabla u\|_{L^{r}}\|\theta\|_{L^{\infty}}\|\Gamma\|_{L^{r}}\\
\leq&C(\|\Gamma\|_{L^{r}}+\|\mathcal{R}_{1}\theta\|_{L^{r}}+\|\omega\|_{L^{r}})
\|\theta\|_{L^{\infty}}\|\Gamma\|_{L^{r}}\\
\nonumber \leq& C\|\theta\|_{L^{\infty}}(\|\Gamma\|_{L^{r}}^{2}+\|\omega\|_{L^{r}}^{2}+1).
\end{align}
It is easy to derive that
\begin{align}\label{3.13}
|I_{2}|\leq C(\|\Gamma\|_{L^{r}}^{2}+\|\omega\|_{L^{r}}^{2}+1).
\end{align}
Inserting (\ref{3.12}) and (\ref{3.13}) into (\ref{3.11}), one has
\begin{align}\label{3.14}
\frac{\mathrm{d}}{\mathrm{d}t}\|\Gamma\|_{L^{r}}^{2}\leq C\|\theta\|_{L^{\infty}}(\|\Gamma\|_{L^{r}}^{2}+\|\omega\|_{L^{r}}^{2}+1).
\end{align}
By doing the $L^{r}$ estimate of $\omega$, we have
\begin{align}\label{3.15}
\frac{\mathrm{d}}{\mathrm{d}t}\|\omega\|_{L^{r}}^{2}+\|\omega\|_{L^{r}}^{2}\leq C\|\Omega\|_{L^{r}}\|\omega\|_{L^r}\leq C(\|\Gamma\|_{L^{r}}^{2}+\|\omega\|_{L^{r}}^{2}+1).
\end{align}
Adding (\ref{3.10}), (\ref{3.14}) and (\ref{3.15}) together and we get
\begin{align}\label{3.16}
\nonumber &\frac{\mathrm{d}}{\mathrm{d}t}(\|\Lambda^{k}\theta\|_{L^{2}}^{2}+\|\Gamma\|_{L^{r}}^{2}+\|\omega\|_{L^{r}}^{2})
+\|\Lambda^{k+\frac{1}{2}}\theta\|_{L^{2}}^{2}\\
\leq &C(\|\theta\|_{L^{\infty}}+\|\Lambda^{\frac{1}{2}}\theta\|_{L^{2}}^{2}+1)
(\|\Lambda^{k}\theta\|_{L^{2}}^{2}+\|\Gamma\|_{L^{r}}^{2}+\|\omega\|_{L^{r}}^{2}+1).
\end{align}
By using Lemma \ref{lem2.1} and the equation (\ref{3.6}), for $N$ to be determined later we have that for $k>\frac{1}{2}$
\begin{align*}
\|\theta\|_{L^{\infty}}\leq&\|S_{N+1}\theta\|_{L^{\infty}}+\sum_{q\geq N+1}\|\Delta_{q}\theta\|_{L^{\infty}}\\
\leq&C2^{N\frac{2}{r'}}\|S_{N+1}\theta\|_{L^{r'}}+C\sum_{q\geq N+1}2^{q(\frac{1}{2}-k)}\|\Lambda^{k+\frac{1}{2}}\Delta_{q}\theta\|_{L^{2}}\\
\leq &C2^{\frac{2N}{r'}}(r'+1)+C2^{N(\frac{1}{2}-k)}\|\Lambda^{k+\frac{1}{2}}\theta\|_{L^{2}}.
\end{align*}
 We choose $2N\le r'<\infty$ and $N$ satisfying
\begin{align*}
N\ge \Big[C\mathrm{log}(e+\|\Lambda^{k+\frac{1}{2}}\theta\|_{L^{2}}^{2})\Big]+1,
\end{align*}
thus we arrive at
\begin{align*}
\|\theta\|_{L^{\infty}}\leq C+C\mathrm{log}(e+\|\Lambda^{k+\frac{1}{2}}\theta\|_{L^{2}}^{2}).
\end{align*}
Inserting the above estimate for $\|\theta\|_{L^\infty}$ into (\ref{3.16}), we have
\begin{align*}
&\frac{\mathrm{d}}{\mathrm{d}t}(\|\Lambda^{k}\theta\|_{L^{2}}^{2}+\|\Gamma\|_{L^{r}}^{2}+\|\omega\|_{L^{r}}^{2})
+\|\Lambda^{k+\frac{1}{2}}\theta\|_{L^{2}}^{2}\\
\leq &C(\mathrm{log}(e+\|\Lambda^{k+\frac{1}{2}}\theta\|_{L^{2}}^{2})+\|\Lambda^{\frac{1}{2}}\theta\|_{L^{2}}^{2}+1)
(\|\Lambda^{k}\theta\|_{L^{2}}^{2}+\|\Gamma\|_{L^{r}}^{2}+\|\omega\|_{L^{r}}^{2}+1).
\end{align*}
Applying the logarithmic Gronwall's inequality in Lemma \ref{lem2.7}, we have that for $\frac{1}{2}<k\leq\frac{r-2}{r}$ and $4< r<\infty$
\begin{align*}
\|\Lambda^{k}\theta\|_{L^{2}}^{2}+\|\Gamma\|_{L^{r}}^{2}+
\|\omega\|_{L^{r}}^{2}+\int_{0}^{t}\|\Lambda^{k+\frac{1}{2}}\theta\|_{L^{2}}^{2}\mathrm{d}\tau\leq Ce^{CT}.
\end{align*}
By the Interpolation inequality, one has
\begin{align*}
\int_{0}^{t}\|u\|_{L^{\infty}}\mathrm{d}\tau
\leq&C\int_{0}^{t}\|u\|_{L^{2}}^{\frac{r-2}{2r-2}}\|\Omega\|_{L^{r}}^{\frac{r}{2r-2}}\mathrm{d}\tau\\
\leq&C\int_{0}^{t}(\|\Gamma\|_{L^{r}}+\|\omega\|_{L^{r}}+1)\mathrm{d}\tau\\
\leq&Ce^{CT}.
\end{align*}
We thus get from the temperature equation
\begin{align*}
\|\theta\|_{L^{\infty}}\leq C\|\theta_{0}\|_{L^{\infty}}+\int_{0}^{t}\|u\|_{L^{\infty}}\mathrm{d}\tau\leq Ce^{CT},
\end{align*}
and the proof of Lemma \ref{lem3.1} is complete.

\end{proof}

\subsection{The estimate of $\|\Omega\|_{L^{\infty}}$}\

In this subsection, we establish the estimate of $\|\Omega\|_{L^{\infty}}$.
Precisely, we have the following lemma.
\begin{lemma}\label{lem3.2}
Let $(u_{0},\omega_{0},\theta_{0})$ satisfy the assumptions stated in Theorem {\ref{thm1.1}}. Then the corresponding solution $(u,\omega,\theta)$ of ({\ref{1.3}}) obeys the following bounds
\begin{align*}
\|\Omega\|_{L^{\infty}}\leq Ce^{e^{CT}}
\end{align*}
for any $0<t<T$.
\end{lemma}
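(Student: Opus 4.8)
\textbf{Proof proposal for Lemma \ref{lem3.2}.}

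The plan is to bound $\|\Omega\|_{L^\infty}$ by the familiar $\log$-interpolation argument applied to the combined quantity $\Gamma = \Omega + \mathcal{R}_1\theta + \omega$, for which transport with forcing allows an $L^\infty$-estimate, and then recover $\Omega = \Gamma - \mathcal{R}_1\theta - \omega$ using the previously established bounds on $\theta$ and $\omega$. First I would record what Lemma \ref{lem3.1} already gives: a global bound on $\|\theta\|_{L^\infty}$, on $\|\Gamma\|_{L^r}+\|\omega\|_{L^r}$ and on $\int_0^T\|\Lambda^{k+\frac12}\theta\|_{L^2}^2\,\mathrm d\tau$ (hence on $\int_0^T\|u\|_{L^\infty}\,\mathrm d\tau$) for suitable $k,r$. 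The $L^\infty$-bound on $\omega$ comes by testing its equation (a heat equation with transport and the source terms $2\chi\nabla\times u$, $-4\chi\omega$) and using the parabolic smoothing in Lemma \ref{lem2.9}: writing $\omega$ via Duhamel's formula, $\|\omega(t)\|_{L^\infty}\le \|e^{-\Delta t}\omega_0\|_{L^\infty} + \int_0^t\|e^{-\Delta(t-\tau)}(\cdots)\|_{L^\infty}\,\mathrm d\tau$, and controlling the convection term by $\|u\cdot\nabla\omega\|$ after an integration by parts, one gets a Grönwall inequality for $\|\omega\|_{L^\infty}$ closed by quantities already bounded; this yields $\|\omega\|_{L^\infty}\le Ce^{CT}$ (and in fact one also wants $\|\nabla\omega\|_{L^\infty}$, obtained similarly with an extra factor $(t-\tau)^{-1/2}$ from Lemma \ref{lem2.9}, up to a $\log$ of higher norms). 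The term $\|\mathcal{R}_1\theta\|_{L^\infty}$ is the delicate piece: since $\mathcal{R}_1$ is not bounded on $L^\infty$, I would split into low and high frequencies, bounding $\|S_N\mathcal{R}_1\theta\|_{L^\infty}\lesssim \|\theta\|_{L^2}$-type terms via Bernstein (Lemma \ref{lem2.1}) and the high-frequency tail $\sum_{q\ge N}\|\Delta_q\mathcal{R}_1\theta\|_{L^\infty}\lesssim \sum_{q\ge N}2^{q(1-k-\frac12)}\|\Lambda^{k+\frac12}\Delta_q\theta\|_{L^2}$, choosing $N\sim \log(e+\|\Lambda^{k+\frac12}\theta\|_{L^2}^2)$ exactly as in the proof of Lemma \ref{lem3.1}, so that $\|\mathcal{R}_1\theta\|_{L^\infty}\le C + C\log(e+\|\Lambda^{k+\frac12}\theta\|_{L^2}^2)$, which is integrable in time by the bound already in hand — actually I'd instead get a bound directly in terms of higher Sobolev norms of $\theta$ and close the loop through the logarithmic Grönwall lemma.

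For $\|\Gamma\|_{L^\infty}$ itself, I would work on the $\Gamma$-equation $\partial_t\Gamma + u\cdot\nabla\Gamma = -[\mathcal{R}_1,u\cdot\nabla]\theta + \mathcal{R}_1u_2 + \Omega - 2\omega$, apply $\Delta_q$, use the commutator estimate Lemma \ref{lem2.5} for $[\Delta_q,u\cdot\nabla]\Gamma$ and Lemma \ref{lem2.4}(2) for $[\mathcal{R}_1,u\cdot\nabla]\theta$ in $B^0_{\infty,\infty}$, and the easy terms $\mathcal{R}_1u_2$, $\Omega$, $\omega$ in $B^0_{\infty,\infty}$ (controlled by $\|\Omega\|_{L^r}+\|\omega\|_{L^r}$ plus a $\log$ from the Besov embedding). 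This produces
\[
\frac{\mathrm d}{\mathrm dt}\|\Gamma\|_{B^0_{\infty,\infty}} \le C\big(\|\nabla u\|_{L^\infty}+\|\nabla u\|_{L^r}\big)\big(\|\Gamma\|_{B^0_{\infty,\infty}}+\|\theta\|_{B^\epsilon_{\infty,\infty}}+\text{lower order}\big),
\]
and since $\|\nabla u\|_{L^\infty}$ is controlled via Lemma \ref{lem2.6} by $\|\nabla u\|_{L^2}+\|\Omega\|_{B^0_{\infty,\infty}}\log(e+\|\Lambda^s u\|_{L^2})$ and $\|\Omega\|_{B^0_{\infty,\infty}}\lesssim \|\Gamma\|_{B^0_{\infty,\infty}}+\|\mathcal{R}_1\theta\|_{B^0_{\infty,\infty}}+\|\omega\|_{B^0_{\infty,\infty}}$, one arrives at a closed differential inequality of the form $Y' \le (m(t) + n(t)\log(e+Y))(Y+e)$ with $m,n\in L^1(0,T)$, whose coefficients are exactly those already shown integrable. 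Grönwall (the ordinary logarithmic version, Lemma \ref{lem2.7} with $\alpha=\beta=0$, or its classical iterated form) then gives a double-exponential bound $\|\Gamma\|_{B^0_{\infty,\infty}}\le Ce^{e^{CT}}$.

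Finally, since $\Omega = \Gamma - \mathcal{R}_1\theta - \omega$ and $\|\Omega\|_{L^\infty}\lesssim \|\Omega\|_{B^0_{\infty,\infty}}\log(e+\|\Omega\|_{H^{s-1}})$ is circular, I would instead bound $\|\Omega\|_{L^\infty}$ through the transport structure of $\Gamma$ directly in $L^\infty$ (transport equations preserve $L^\infty$ up to the time-integral of the forcing in $L^\infty$): $\|\Gamma(t)\|_{L^\infty}\le \|\Gamma_0\|_{L^\infty} + \int_0^t\big(\|[\mathcal{R}_1,u\cdot\nabla]\theta\|_{L^\infty} + \|\mathcal{R}_1u_2\|_{L^\infty} + \|\Omega\|_{L^\infty} + 2\|\omega\|_{L^\infty}\big)\,\mathrm d\tau$, where each forcing term carries at worst a $\log$ of higher norms, so combined with $\|\mathcal{R}_1\theta\|_{L^\infty}\le C e^{CT}$ and $\|\omega\|_{L^\infty}\le Ce^{CT}$ and Grönwall one closes on $\|\Omega\|_{L^\infty}\le Ce^{e^{CT}}$. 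The main obstacle, as anticipated, is handling $\mathcal{R}_1\theta$ and the commutator $[\mathcal{R}_1,u\cdot\nabla]\theta$ at the $L^\infty$ (or $B^0_{\infty,\infty}$) level: the Riesz transform fails to be $L^\infty$-bounded, so everything hinges on the frequency-splitting trick together with the gain $\int_0^T\|\Lambda^{k+1/2}\theta\|_{L^2}^2\,\mathrm d\tau<\infty$ from the critical dissipation, exactly as used in Lemma \ref{lem3.1}; once that logarithmic loss is absorbed, the rest is a standard logarithmic-Grönwall bootstrap.
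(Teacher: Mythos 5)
Your final strategy --- the $L^{\infty}$ transport estimate for $\Gamma$, separate control of $\mathcal{R}_{1}\theta$ and $\omega$, then Gronwall --- is exactly the skeleton of the paper's proof, but the crux is left unresolved: a non-circular bound for $\int_{0}^{t}\|[\mathcal{R}_{1},u\cdot\nabla]\theta\|_{L^{\infty}}\,\mathrm{d}\tau$ that is \emph{linear} in $\|\Omega\|_{L^{\infty}}$ with a time-integrable coefficient. Neither of your suggestions achieves this as stated. Estimating the commutator in $B^{0}_{\infty,\infty}$ (Lemma \ref{lem2.4}(2) with third index $\infty$) does not control $L^{\infty}$; returning to $L^{\infty}$ costs a logarithm of a norm such as $\|\Lambda^{s}u\|_{L^{2}}$ or $\|\Omega\|_{H^{s-1}}$, which is not yet controlled at this stage --- the circularity you yourself flag for $\Omega$ applies equally to the commutator, and Lemma \ref{lem2.7} does not help because the logarithm is not of the quantity being estimated. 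The paper instead applies Lemma \ref{lem2.4}(2) with summability index $1$, so that
\begin{align*}
\|[\mathcal{R}_{1},u\cdot\nabla]\theta\|_{L^{\infty}}\leq C\|[\mathcal{R}_{1},u\cdot\nabla]\theta\|_{B^{0}_{\infty,1}}
\leq C(\|\Omega\|_{L^{\infty}}+\|\Omega\|_{L^{2}})(\|\theta\|_{B^{\epsilon}_{\infty,1}}+\|\theta\|_{L^{2}}),
\end{align*}
with no logarithmic loss, and devotes the entire first half of the proof to showing $\int_{0}^{T}\|\theta\|_{B^{\epsilon}_{\infty,1}}\,\mathrm{d}\tau\leq Ce^{CT}$: it applies $\Delta_{q}$ to the temperature equation, extracts the dissipation $c2^{q}\|\Delta_{q}\theta\|_{L^{p}}$ via Lemmas \ref{lem2.1} and \ref{lem2.8}, controls $[\Delta_{q},u\cdot\nabla]\theta$ by Lemma \ref{lem2.5}, and obtains the smoothing estimate $\|\theta\|_{\tilde{L}^{1}_{t}B^{1}_{p,\infty}}\leq Ce^{CT}$, hence $\|\theta\|_{L^{1}_{t}B^{\epsilon}_{\infty,1}}\leq Ce^{CT}$ for $0<\epsilon<1-\frac{2}{p}$; only then does Gronwall give the double exponential. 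Your alternative source of integrability, $\int_{0}^{T}\|\Lambda^{k+\frac{1}{2}}\theta\|_{L^{2}}^{2}\,\mathrm{d}\tau<\infty$ with $k>\frac{1}{2}$ from Lemma \ref{lem3.1}, could in fact replace that step via $H^{k+\frac{1}{2}}(\mathbb{R}^{2})\hookrightarrow B^{k-\frac{1}{2}}_{\infty,2}\hookrightarrow B^{\epsilon}_{\infty,1}$ for $0<\epsilon<k-\frac{1}{2}$, but you never make this precise, and the phrase ``each forcing term carries at worst a log of higher norms'' is precisely what must be avoided here.

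Two further problems. First, your pointwise-in-time bound $\|\mathcal{R}_{1}\theta\|_{L^{\infty}}\leq C+C\log(e+\|\Lambda^{k+\frac{1}{2}}\theta\|_{L^{2}}^{2})$ does not yield $\|\mathcal{R}_{1}\theta\|_{L^{\infty}}\leq Ce^{CT}$, since $\|\Lambda^{k+\frac{1}{2}}\theta(t)\|_{L^{2}}$ is only known to be square-integrable in time, not bounded; the paper instead estimates $\mathcal{R}_{1}\theta$ through its own transport-dissipation equation, $\|\mathcal{R}_{1}\theta\|_{L^{\infty}}\leq\|\mathcal{R}_{1}\theta_{0}\|_{L^{\infty}}+\int_{0}^{t}\|[\mathcal{R}_{1},u\cdot\nabla]\theta\|_{L^{\infty}}\,\mathrm{d}\tau+\int_{0}^{t}\|\mathcal{R}_{1}u_{2}\|_{L^{\infty}}\,\mathrm{d}\tau$, and places it inside the same Gronwall loop as $\Gamma$ and $\omega$ rather than treating it as an a priori bounded quantity. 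Second, the same remark applies to $\omega$: the paper uses the maximum principle $\|\omega\|_{L^{\infty}}+2\int_{0}^{t}\|\omega\|_{L^{\infty}}\,\mathrm{d}\tau\leq\|\omega_{0}\|_{L^{\infty}}+\int_{0}^{t}\|\Omega\|_{L^{\infty}}\,\mathrm{d}\tau$ inside the loop (your Duhamel route for $\|\omega\|_{L^{\infty}}\leq Ce^{CT}$ is salvageable using only Lemma \ref{lem3.1} quantities, since $\|\Omega\|_{L^{r}}$ is already bounded, but $\mathcal{R}_{1}\theta$ is not). With the $B^{0}_{\infty,1}$ commutator bound and a time-integrable bound on $\|\theta\|_{B^{\epsilon}_{\infty,1}}$ in hand, the rest of your argument closes as you describe; without them, the proof has a genuine gap at its central point.
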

\begin{proof}
First, we estimate $\|\theta\|_{\tilde{L}_{t}^{1}B_{p,\infty}^{1}}$. Applying the operator $\Delta_{q}$ to the temperature equation, we have
\begin{align*}
\partial_{t}\Delta_{q}\theta+u\cdot\nabla\Delta_{q}\theta+\Lambda\Delta_{q}\theta
=-[\Delta_{q},u\cdot\nabla]\theta+\Delta_{q}u_{2}.
\end{align*}
Taking the $L^{2}$-inner product to the above equation with $|\Delta_{q}\theta|^{p-2}\Delta_{q}\theta$ for $2<p<\infty$ and $q\geq0$, we have
\begin{align*}
\frac{1}{p}&\frac{\mathrm{d}}{\mathrm{d}t}\|\Delta_{q}\theta\|_{L^{p}}^{p}+
\int_{\mathbb{R}^{2}}\Lambda\Delta_{q}\theta|\Delta_{q}\theta|^{p-2}\Delta_{q}\theta\mathrm{d}x\\
\leq&\|[\Delta_{q},u\cdot\nabla]\theta\|_{L^{p}}\|\Delta_{q}\theta\|_{L^{p}}^{p-1}+
\|\Delta_{q}u_{2}\|_{L^{p}}\Delta_{q}\theta\|_{L^{p}}^{p-1}.
\end{align*}
By Lemmas \ref{lem2.1} and \ref{lem2.8}, one has
\begin{align*}
\frac{c}{p}2^{q}\|\Delta_{q}\theta\|_{L^{p}}^{p}\leq
\int_{\mathbb{R}^{2}}\Lambda\Delta_{q}\theta|\Delta_{q}\theta|^{p-2}\Delta_{q}\theta\mathrm{d}x.
\end{align*}
Thus
\begin{align*}
\frac{\mathrm{d}}{\mathrm{d}t}\|\Delta_{q}\theta\|_{L^{p}}+c2^{q}\|\Delta_{q}\theta\|_{L^{p}}
\leq C\|[\Delta_{q},u\cdot\nabla]\theta\|_{L^{p}}+C\|\Delta_{q}u_{2}\|_{L^{p}}.
\end{align*}
Integrating with respect to $t$, it arrives at
\begin{align}\label{theta-p}
\|\Delta_{q}\theta\|_{L^{p}}\leq Ce^{-ct2^{q}}\|\Delta_{q}\theta_{0}\|_{L^{p}}+
C\int_{0}^{t}e^{-c(t-\tau)2^{q}}(\|[\Delta_{q},u\cdot\nabla]\theta\|_{L^{p}}+\|\Delta_{q}u_{2}\|_{L^{p}})
\mathrm{d}\tau.
\end{align}
Multiplying the above estimate (\ref{theta-p}) with $2^{q}$ and integrating in time and using Lemma \ref{lem2.5}, we have
\begin{align*}
2^q\|\Delta_{q}\theta\|_{L_{t}^{1}L^{p}}\leq&C\|\Delta_{q}\theta_{0}\|_{L^{p}}+
C\int_{0}^{t}(\|[\Delta_{q},u\cdot\nabla]\theta\|_{L^{p}}+\|\Delta_{q}u_{2}\|_{L^{p}})\mathrm{d}\tau\\
\leq&C\|\theta_{0}\|_{L^{p}}+C\int_{0}^{t}(\|\nabla u\|_{L^{p}}\|\theta\|_{B_{\infty,\infty}^{0}}+\|u\|_{L^{p}})\mathrm{d}\tau\\
\leq&C\|\theta_{0}\|_{L^{p}}+C\int_{0}^{t}(\|\Omega\|_{L^{p}}\|\theta\|_{L^{\infty}}+\|u\|_{L^{p}})\mathrm{d}\tau\\
\leq&Ce^{CT}.
\end{align*}
Thus we have
\begin{align*}
\|\theta\|_{\tilde{L}_{t}^{1}B_{p,\infty}^{1}}\leq Ce^{CT}.
\end{align*}
By Bernstein inequality in Lemma \ref{lem2.1}, we have that for every $0<\epsilon<1-\frac{2}{p}$ and $2<p<\infty$
\begin{align*}
\|\theta\|_{L_{t}^{1}B_{\infty,1}^{\epsilon}}\leq C\|\theta\|_{\tilde{L}_{t}^{1}B_{p,\infty}^{1}}\leq Ce^{CT}.
\end{align*}

Next, we estimate $\|\Omega\|_{L^{\infty}}$. By doing the $L^{\infty}$ estimates of $\Gamma$, $\omega$ and $\mathcal{R}_{1}\theta$, one has
\begin{align*}
\|\Gamma\|_{L^{\infty}}&\leq\|\Gamma_{0}\|_{L^{\infty}}+\int_{0}^{t}\|[\mathcal{R}_{1},u\cdot\nabla]\theta\|_{L^{\infty}}\mathrm{d}\tau
+\int_{0}^{t}\|\mathcal{R}_{1}u_{2}\|_{L^{\infty}}\mathrm{d}\tau\\
&+\int_{0}^{t}\|\Omega\|_{L^{\infty}}\mathrm{d}\tau+2\int_{0}^{t}\|\omega\|_{L^{\infty}}\mathrm{d}\tau,
\end{align*}
\begin{align*}
\|\omega\|_{L^{\infty}}+2\int_{0}^{t}\|\omega\|_{L^{\infty}}\mathrm{d}\tau\leq \|\omega_{0}\|_{L^{\infty}}+\int_{0}^{t}\|\Omega\|_{L^{\infty}}\mathrm{d}\tau,
\end{align*}
\begin{align*}
\|\mathcal{R}_{1}\theta\|_{L^{\infty}}\leq \|\mathcal{R}_{1}\theta_{0}\|_{L^{\infty}}
+\int_{0}^{t}\|[\mathcal{R}_{1},u\cdot\nabla]\theta\|_{L^{\infty}}\mathrm{d}\tau
+\int_{0}^{t}\|\mathcal{R}_{1}u_{2}\|_{L^{\infty}}\mathrm{d}\tau.
\end{align*}
Thus
\begin{align}\label{3.17}
\nonumber \|\Omega\|_{L^{\infty}}&\leq\|\Gamma\|_{L^{\infty}}+\|\mathcal{R}_{1}\theta\|_{L^{\infty}}+\|\omega\|_{L^{\infty}}\\
&\leq C\|\Omega_{0}\|_{L^{\infty}}+C\|\mathcal{R}_{1}\theta_{0}\|_{L^{\infty}}+C\|\omega_{0}\|_{L^{\infty}}+
C\int_{0}^{t}\|[\mathcal{R}_{1},u\cdot\nabla]\theta\|_{L^{\infty}}\mathrm{d}\tau\\
\nonumber &+C\int_{0}^{t}\|\mathcal{R}_{1}u_{2}\|_{L^{\infty}}\mathrm{d}\tau
+C\int_{0}^{t}\|\Omega\|_{L^{\infty}}\mathrm{d}\tau.
\end{align}
By Lemma \ref{lem2.4}, it can be derived
\begin{align}\label{3.18}
\nonumber \int_{0}^{t}\|[\mathcal{R}_{1},u\cdot\nabla]\theta\|_{L^{\infty}}\mathrm{d}\tau\leq&
C\int_{0}^{t}\|[\mathcal{R}_{1},u\cdot\nabla]\theta\|_{B_{\infty,1}^{0}}\mathrm{d}\tau\\
\nonumber \leq&C\int_{0}^{t}(\|\Omega\|_{L^{\infty}}+\|\Omega\|_{L^{2}})(\|\theta\|_{B_{\infty,1}^{\epsilon}}+\|\theta\|_{L^{2}})\mathrm{d}\tau\\
\leq&C\int_{0}^{t}(\|\theta\|_{B_{\infty,1}^{\epsilon}}+1)(\|\Omega\|_{L^{\infty}}+1)\mathrm{d}\tau.
\end{align}
For $2<p<\infty$ one has
\begin{align}\label{3.19}
\int_{0}^{t}\|\mathcal{R}_{1}u_{2}\|_{L^{\infty}}\mathrm{d}\tau\leq C\int_{0}^{t}\|u\|_{L^{2}}^{\frac{p-2}{2p-2}}\|\Omega\|_{L^{p}}^{\frac{p}{2p-2}}\mathrm{d}\tau\leq Ce^{CT}.
\end{align}
Inserting (\ref{3.18}) and (\ref{3.19}) into (\ref{3.17}), we have
\begin{align*}
\|\Omega\|_{L^{\infty}}\leq C+C\int_{0}^{t}(\|\theta\|_{B_{\infty,1}^{\epsilon}}+1)(\|\Omega\|_{L^{\infty}}+1)\mathrm{d}\tau.
\end{align*}
By Gronwall's inequality, we have
\begin{align*}
\|\Omega\|_{L^{\infty}}\leq Ce^{e^{CT}},
\end{align*}
and the proof of Lemma \ref{lem3.2} is complete.
\end{proof}

\subsection{The estimates of $\|\nabla\omega\|_{L^{\infty}}$ and $\int_{0}^{T}\|\Lambda^{\frac{3}{2}}\theta\|_{L^{2}}^{2}\mathrm{d}t$}\

In this subsection, we establish the estimates of $\|\nabla\omega\|_{L^{\infty}}$ and $\int_{0}^{T}\|\Lambda^{\frac{3}{2}}\theta\|_{L^{2}}^{2}\mathrm{d}t$.
Precisely, we have the following lemma.
\begin{lemma}\label{lem3.3}
Let $(u_{0},\omega_{0},\theta_{0})$ satisfy the assumptions stated in Theorem {\ref{thm1.1}}. Then the corresponding solution $(u,\omega,\theta)$ of ({\ref{1.3}}) obeys the following bounds
\begin{align*}
\|\nabla\omega\|_{L^{\infty}}+\|\Lambda\theta\|_{L^{2}}^{2}+\int_{0}^{t}\|\Lambda^{\frac{3}{2}}\theta\|_{L^{2}}^{2}\mathrm{d}\tau
\leq Ce^{e^{CT}}
\end{align*}
for any $0<t<T$.
\end{lemma}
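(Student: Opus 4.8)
\textbf{Proof proposal for Lemma \ref{lem3.3}.}

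The plan is to treat the two quantities in turn, using the bound $\|\Omega\|_{L^{\infty}}\leq Ce^{e^{CT}}$ from Lemma \ref{lem3.2} as the driving input. First I would estimate $\|\Lambda\theta\|_{L^{2}}^{2}$ together with $\int_{0}^{t}\|\Lambda^{\frac{3}{2}}\theta\|_{L^{2}}^{2}\mathrm{d}\tau$ by performing the $\dot H^{1}$ energy estimate on the temperature equation: applying $\Lambda$ to $\partial_{t}\theta+u\cdot\nabla\theta+\Lambda\theta=u_{2}$, pairing with $\Lambda\theta$, and using the divergence-free structure to write the nonlinear term as $\int[\Lambda,u\cdot\nabla]\theta\,\Lambda\theta\,\mathrm{d}x$. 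I would bound the commutator by Lemma \ref{lem2.2} (with $s=1$), estimating $\|[\Lambda,u\cdot\nabla]\theta\|_{L^{2}}\leq C\|\nabla u\|_{L^{\infty}}\|\theta\|_{\dot H^{1}}$ after an interpolation, or alternatively split it so that only $\|\Omega\|_{L^{\infty}}$ (not $\|\nabla u\|_{L^{\infty}}$) appears, which is cleaner since $\|\nabla u\|_{L^{\infty}}$ is not yet controlled; then absorb a factor $\frac{1}{2}\|\Lambda^{\frac{3}{2}}\theta\|_{L^{2}}^{2}$ on the left by Young's inequality. The linear term $\int\Lambda u_{2}\,\Lambda\theta\,\mathrm{d}x$ is handled by $\|\Lambda u_{2}\|_{L^{2}}=\|\Omega\|_{L^{2}}$ (Riesz transform boundedness) and Young. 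This yields a differential inequality $\frac{\mathrm{d}}{\mathrm{d}t}\|\Lambda\theta\|_{L^{2}}^{2}+\|\Lambda^{\frac{3}{2}}\theta\|_{L^{2}}^{2}\leq C(\|\Omega\|_{L^{\infty}}+1)(\|\Lambda\theta\|_{L^{2}}^{2}+1)$, and Gronwall (with the $Ce^{e^{CT}}$ bound on $\int_{0}^{t}\|\Omega\|_{L^{\infty}}\mathrm{d}\tau$) gives $\|\Lambda\theta\|_{L^{2}}^{2}+\int_{0}^{t}\|\Lambda^{\frac{3}{2}}\theta\|_{L^{2}}^{2}\mathrm{d}\tau\leq Ce^{e^{CT}}$.

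Next I would estimate $\|\nabla\omega\|_{L^{\infty}}$ using the heat-kernel representation for the $\omega$-equation, which is $\partial_{t}\omega-\Delta\omega+4\chi\omega=-u\cdot\nabla\omega+2\chi\nabla\times u$. Writing Duhamel's formula, $\omega(t)=e^{t\Delta}\omega_{0}+\int_{0}^{t}e^{(t-\tau)\Delta}\big(-4\chi\omega-u\cdot\nabla\omega+2\chi\,\Omega\big)(\tau)\,\mathrm{d}\tau$ (here $\nabla\times u=\Omega$ as a scalar in 2D), and applying $\nabla$, I would use Lemma \ref{lem2.9} with $s=1$. The forcing term $\nabla e^{(t-\tau)\Delta}\Omega$ contributes $\int_{0}^{t}(t-\tau)^{-1/2}\|\Omega\|_{L^{\infty}}\,\mathrm{d}\tau$, which is integrable in $\tau$ and bounded by $Ce^{e^{CT}}$. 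The zeroth-order term $\nabla e^{(t-\tau)\Delta}\omega$ contributes $\int_{0}^{t}(t-\tau)^{-1/2}\|\omega\|_{L^{\infty}}\,\mathrm{d}\tau$, controlled via the already-established $\|\omega\|_{L^{\infty}}\leq\|\Gamma\|_{L^{\infty}}+\|\mathcal{R}_{1}\theta\|_{L^{\infty}}+\|\Omega\|_{L^{\infty}}\leq Ce^{e^{CT}}$ from Lemma \ref{lem3.2}. The transport term is the delicate one: I would write $\nabla e^{(t-\tau)\Delta}(u\cdot\nabla\omega)=\nabla e^{(t-\tau)\Delta}\nabla\cdot(u\,\omega)$ using $\nabla\cdot u=0$, so Lemma \ref{lem2.9} gives a factor $(t-\tau)^{-1}$ times $\|u\,\omega\|_{L^{\infty}}$, but $(t-\tau)^{-1}$ is not integrable; instead I would keep one derivative on $\omega$ and estimate $\|\nabla e^{(t-\tau)\Delta}(u\cdot\nabla\omega)\|_{L^{\infty}}\leq C(t-\tau)^{-1/2-1/p}\|u\otimes\omega\|_{L^{p}}$ after moving the outer $\nabla$ and one more $\nabla$ from $\nabla\cdot(u\omega)$ onto the kernel, choosing $p$ large enough that $\tfrac12+\tfrac1p<1$, and controlling $\|u\|_{L^{\infty}}$ (from the interpolation bound $\|u\|_{L^\infty}\le C(\|\Gamma\|_{L^r}+\|\omega\|_{L^r}+1)$ in Lemma \ref{lem3.1}) and $\|\omega\|_{L^{p}}$ (from the $L^r$ bound on $\omega$, interpolated with $L^2$). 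This produces $\|\nabla\omega\|_{L^{\infty}}\leq Ce^{e^{CT}}+C\int_{0}^{t}(t-\tau)^{-1/2-1/p}\,\mathrm{d}\tau\cdot Ce^{e^{CT}}\leq Ce^{e^{CT}}$.

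The main obstacle will be the transport term in the $\nabla\omega$ estimate: one must distribute the derivatives between the heat kernel and the factor $u\otimes\omega$ so that the resulting time-singularity $(t-\tau)^{-\alpha}$ stays integrable ($\alpha<1$), while every spatial norm that appears — $\|u\|_{L^{\infty}}$, $\|\omega\|_{L^{p}}$ — is among the quantities already bounded in Lemmas \ref{lem3.1} and \ref{lem3.2}. Once the exponents are chosen consistently ($1<p<\infty$ with $\tfrac12+\tfrac1p<1$, i.e.\ $p>2$), the remaining steps are routine applications of Lemma \ref{lem2.9}, Young's convolution inequality in time, and Gronwall. A secondary subtlety in the $\dot H^{1}$ temperature estimate is to route the commutator bound through $\|\Omega\|_{L^{\infty}}$ rather than $\|\nabla u\|_{L^{\infty}}$, since the latter is exactly what the companion estimate is producing and a circular dependence must be avoided; splitting $[\Lambda,u\cdot\nabla]\theta$ via paraproducts (or using the Lemma \ref{lem2.2} endpoint with $p_1=\infty$ on the low-frequency part of $u$ and $L^2$–$L^2$ on the rest) handles this cleanly.
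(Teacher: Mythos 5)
There is a genuine gap in your treatment of the transport term in the $\|\nabla\omega\|_{L^{\infty}}$ estimate, and it is precisely the step you single out as delicate. Your claimed bound $\|\nabla e^{(t-\tau)\Delta}\nabla\cdot(u\,\omega)\|_{L^{\infty}}\leq C(t-\tau)^{-\frac12-\frac1p}\|u\otimes\omega\|_{L^{p}}$ miscounts the smoothing cost: if both derivatives sit on the kernel and the data is measured in $L^{p}$, Lemma \ref{lem2.9} (with $s=2$, $d=2$, $q=\infty$) gives the factor $(t-\tau)^{-1}(t-\tau)^{-\frac1p}=(t-\tau)^{-1-\frac1p}$, which is non-integrable for every $p\leq\infty$; passing from $L^{p}$ to $L^{\infty}$ makes the singularity worse, not better. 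The exponent $-\frac12-\frac1p$ corresponds to only one derivative on the kernel, but then the data must be $\|u\cdot\nabla\omega\|_{L^{p}}$, which reintroduces $\|\nabla\omega\|_{L^{p}}$ — a quantity not controlled by Lemmas \ref{lem3.1}--\ref{lem3.2} (only $\int_{0}^{t}\|\nabla\omega\|_{L^{2}}^{2}\mathrm{d}\tau$ is known, and with $p=2$ the singularity is again $(t-\tau)^{-1}$). So your stated goal of quoting only already-bounded spatial norms cannot be met this way. The paper resolves this exactly by \emph{not} avoiding $\nabla\omega$ on the right: it estimates $\|u\cdot\nabla\omega\|_{L^{\infty}}\leq\|u\|_{L^{\infty}}\|\nabla\omega\|_{L^{\infty}}$ with the mild singularity $(t-\tau)^{-\frac12}$ from the single outer gradient, uses the pointwise-in-time bound $\|u\|_{L^{\infty}}\leq C(\|\Gamma\|_{L^{r}}+\|\omega\|_{L^{r}}+1)\leq Ce^{CT}$ from Lemma \ref{lem3.1}, applies H\"older in time to tame $(t-\tau)^{-1/2}$, and closes with Gronwall for $\|\nabla\omega\|_{L^{\infty}}$. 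Your forcing and zeroth-order terms are handled as in the paper and are fine; it is only the transport term whose bookkeeping fails.

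For the $\dot H^{1}$ estimate of $\theta$, your structure (commutator, absorb $\frac12\|\Lambda^{\frac32}\theta\|_{L^{2}}^{2}$, Gronwall) matches the paper, but your preferred route through $\|\Omega\|_{L^{\infty}}$ is not delivered by Lemma \ref{lem2.2}: that lemma requires the second index $p_{2}$ finite, and $\|\Omega\|_{L^{\infty}}$ controls $\|\nabla u\|_{BMO}$, not $\|\nabla u\|_{L^{\infty}}$, so the splitting you sketch would need a BMO/paraproduct commutator estimate not in the paper's toolkit. The cleaner fix — and the paper's actual argument in (\ref{3.21}) — stays with finite exponents: bound $\|[\Lambda,u\cdot\nabla]\theta\|_{L^{4/3}}\leq C\|\nabla u\|_{L^{4}}\|\Lambda\theta\|_{L^{2}}$ by Lemma \ref{lem2.2}, pair it with $\|\Lambda\theta\|_{L^{4}}\leq C\|\Lambda^{\frac32}\theta\|_{L^{2}}$ by Sobolev embedding, and use $\|\nabla u\|_{L^{4}}\leq C\|\Omega\|_{L^{4}}$, which is among the quantities already available; Gronwall then gives the stated bound. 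This part of your plan is repairable within the paper's lemmas; the $\nabla\omega$ transport term is the substantive error.
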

\begin{proof}
First, we write the equation of $\omega$ as
\begin{align*}
\omega=e^{t\Delta}\omega_{0}+\int_{0}^{t}e^{(t-\tau)\Delta}(\Omega-2\omega-u\cdot\nabla\omega)\mathrm{d}\tau.
\end{align*}
We do the $\dot{W}^{1,\infty}$ estimate of above equation. By Lemma \ref{lem2.9} and Young's inequality, we have
\begin{align*}
\|\nabla\omega\|_{L^{\infty}}\leq&\|\nabla e^{t\Delta}\omega_{0}\|_{L^{\infty}}+
\int_{0}^{t}\|\nabla e^{(t-\tau)\Delta}(\Omega-2\omega-u\cdot\nabla\omega)\|_{L^{\infty}}\mathrm{d}\tau\\
\leq&\|\nabla\omega_{0}\|_{L^{\infty}}+\int_{0}^{t}(t-\tau)^{-\frac{1}{2}}(\|\Omega\|_{L^{\infty}}+2\|\omega\|_{L^{\infty}}
+\|u\|_{L^{\infty}}\|\nabla\omega\|_{L^{\infty}})\mathrm{d}\tau\\
\leq&\|\nabla\omega_{0}\|_{L^{\infty}}+
CT^{\frac{1}{4}}(\int_{0}^{t}(e^{e^{CT}}+e^{CT}\|\nabla\omega\|_{L^{\infty}})^{4}\mathrm{d}\tau)^{\frac{1}{4}},
\end{align*}
By the Gronwall's inequality, we have
\begin{align*}
\|\nabla\omega\|_{L^{\infty}}\leq Ce^{e^{CT}}.
\end{align*}

Next, we do the $\dot{H}^{1}$ estimate of $\theta$. Applying the operator $\Lambda$ to the temperature equation, dotting $\Lambda\theta$ and integrating over $\mathbb{R}^{2}$, we can derive that
\begin{align}\label{3.20}
\frac{1}{2}\frac{\mathrm{d}}{\mathrm{d}t}\|\Lambda\theta\|_{L^{2}}^{2}+\|\Lambda^{\frac{3}{2}}\theta\|_{L^{2}}^{2}
=-\int_{\mathbb{R}^{2}}\Lambda(u\cdot\nabla\theta)\Lambda\theta\mathrm{d}x+
\int_{\mathbb{R}^{2}}\Lambda u_{2}\Lambda\theta\mathrm{d}x.
\end{align}
By Lemma \ref{lem2.2} and Young's inequality, we have
\begin{align}\label{3.21}
\nonumber |\int_{\mathbb{R}^{2}}\Lambda(u\cdot\nabla\theta)\Lambda\theta\mathrm{d}x|
=&|\int_{\mathbb{R}^{2}}[\Lambda,u\cdot\nabla]\theta\Lambda\theta\mathrm{d}x|\\
\nonumber \leq&C\|[\Lambda,u\cdot\nabla]\theta\|_{L^{\frac{4}{3}}}\|\Lambda\theta\|_{L^{4}}\\
\nonumber \leq&C\|\nabla u\|_{L^{4}}\|\Lambda\theta\|_{L^{2}}\|\Lambda^{\frac{3}{2}}\theta\|_{L^{2}}\\
\nonumber \leq&C\|\Omega\|_{L^{4}}\|\Lambda\theta\|_{L^{2}}\|\Lambda^{\frac{3}{2}}\theta\|_{L^{2}}\\
\leq&C\|\Lambda\theta\|_{L^{2}}^{2}\|\Omega\|^2_{L^{4}}+\frac{1}{2}\|\Lambda^{\frac{3}{2}}\theta\|_{L^{2}}^{2}.
\end{align}
\begin{align}\label{3.22}
\nonumber |\int_{\mathbb{R}^{2}}\Lambda u_{2}\Lambda\theta\mathrm{d}x|
\leq&C\|\Lambda u\|_{L^{2}}\|\Lambda\theta\|_{L^{2}}\\
\leq&C\|\Lambda\theta\|_{L^{2}}^{2}+C.
\end{align}
Inserting (\ref{3.21}) and (\ref{3.22}) into (\ref{3.20}), one has
\begin{align*}
\frac{\mathrm{d}}{\mathrm{d}t}\|\Lambda\theta\|_{L^{2}}^{2}+
\|\Lambda^{\frac{3}{2}}\theta\|_{L^{2}}^{2}\leq C\|\Lambda\theta\|_{L^{2}}^{2}\|\Omega\|^2_{L^{4}}+C,
\end{align*}
thus we have
\begin{align*}
\|\Lambda\theta\|_{L^{2}}^{2}+
\int_{0}^{t}\|\Lambda^{\frac{3}{2}}\theta\|_{L^{2}}^{2}\mathrm{d}\tau\leq Ce^{e^{CT}},
\end{align*}
and the proof of Lemma \ref{lem3.3} is completed.
\end{proof}

\subsection{The global $H^{s}$ estimates of $(u,\omega,\theta)$}\

This section establish the global $H^{s}$ estimates of $(u,\omega,\theta)$ with $s>2$.
\begin{Pros}\label{Pros3.1}
Let $(u_{0},\omega_{0},\theta_{0})$ satisfy the assumptions stated in Theorem {\ref{thm1.1}}. Then the corresponding solution $(u,\omega,\theta)$ of ({\ref{1.3}}) is bounded in $H^{s}(\mathbb{R}^{2})$.
\end{Pros}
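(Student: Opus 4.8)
The plan is to close the argument with a standard energy estimate in $H^s$ for the full system, using all the a priori bounds collected in Lemmas \ref{lem3.1}--\ref{lem3.3}. Applying $\Lambda^s$ to each of the three equations in (\ref{1.3}), pairing with $\Lambda^s u$, $\Lambda^s\omega$, $\Lambda^s\theta$ respectively, and summing, the pressure term drops by incompressibility and the linear coupling terms $2\chi\nabla\times\omega$, $e_2\theta$, $2\chi\nabla\times u$, $u\cdot e_2$ together with the damping $-4\chi\omega$ are all controlled by $C(\|u\|_{H^s}^2+\|\omega\|_{H^s}^2+\|\theta\|_{H^s}^2)$ plus the good dissipation terms $\|\Lambda^{s+1}\omega\|_{L^2}^2$ and $\|\Lambda^{s+1/2}\theta\|_{L^2}^2$ on the left. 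So the whole issue reduces to the three transport nonlinearities. Writing $\int \Lambda^s(u\cdot\nabla f)\Lambda^s f\,dx = \int [\Lambda^s,u\cdot\nabla]f\,\Lambda^s f\,dx$ (the term $\int u\cdot\nabla\Lambda^s f\,\Lambda^s f\,dx$ vanishes since $\nabla\cdot u=0$), I would invoke the commutator estimate Lemma \ref{lem2.2} with $r=2$:
\begin{align*}
\|[\Lambda^s,u]\nabla f\|_{L^2}\leq C(\|\nabla u\|_{L^\infty}\|\Lambda^{s-1}\nabla f\|_{L^2}+\|\Lambda^s u\|_{L^2}\|\nabla f\|_{L^\infty}),
\end{align*}
so each transport term is bounded by $C(\|\nabla u\|_{L^\infty}+\|\nabla f\|_{L^\infty})(\|u\|_{H^s}^2+\|f\|_{H^s}^2)$. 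For $f=\omega$ the factor $\|\nabla\omega\|_{L^\infty}$ is already bounded by $Ce^{e^{CT}}$ from Lemma \ref{lem3.3}; for $f=\theta$ I would instead split off a half-derivative and use the dissipation, estimating $\int\Lambda^s(u\cdot\nabla\theta)\Lambda^s\theta$ via $\|[\Lambda^{s},u\cdot\nabla]\theta\|_{H^{-1/2}}\|\Lambda^s\theta\|_{H^{1/2}}$ and Lemma \ref{lem2.3}, exactly as was done for the $\Lambda^k$ estimate in (\ref{3.8}), absorbing $\frac12\|\Lambda^{s+1/2}\theta\|_{L^2}^2$ on the left; the remaining coefficient then involves $\|\nabla u\|_{L^\infty}$ and lower-order quantities already controlled.

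The one genuinely nontrivial input is $\int_0^T\|\nabla u\|_{L^\infty}\,dt<\infty$, since the velocity equation has no dissipation and $\nabla u$ is only a singular-integral transform of $\Omega$. Here I would use the logarithmic Sobolev inequality (Lemma \ref{lem2.6}): $\|\nabla u\|_{L^\infty}\leq C\|\nabla u\|_{L^2}+C\|\Omega\|_{B^0_{\infty,\infty}}\log(e+\|\Lambda^s u\|_{L^2})$, together with $\|\Omega\|_{B^0_{\infty,\infty}}\leq C\|\Omega\|_{L^\infty}\leq Ce^{e^{CT}}$ from Lemma \ref{lem3.2} and $\|\nabla u\|_{L^2}\leq C\|\Omega\|_{L^2}\leq Ce^{CT}$ from Lemma \ref{lem3.1}. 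Feeding this back, the $H^s$ energy $Y(t):=\|u\|_{H^s}^2+\|\omega\|_{H^s}^2+\|\theta\|_{H^s}^2$ satisfies a differential inequality of the form
\begin{align*}
Y'(t)+\|\Lambda^{s+1}\omega\|_{L^2}^2+\|\Lambda^{s+\frac12}\theta\|_{L^2}^2\leq C\big(g(t)+h(t)\log(e+Y)\big)(Y+e),
\end{align*}
with $g,h\in L^1(0,T)$ (indeed $g,h\lesssim e^{e^{CT}}$). This is precisely the hypothesis of the logarithmic Gronwall inequality Lemma \ref{lem2.7} with $\alpha=\beta=0$ (taking $B(t)=\|\Lambda^{s+1}\omega\|_{L^2}^2+\|\Lambda^{s+1/2}\theta\|_{L^2}^2$ and $n(t)=Ch(t)$, $m(t)=Cg(t)$, $f\equiv0$), and it yields
\begin{align*}
Y(t)+\int_0^t\big(\|\Lambda^{s+1}\omega\|_{L^2}^2+\|\Lambda^{s+\frac12}\theta\|_{L^2}^2\big)\,d\tau\leq \tilde C(T)<\infty
\end{align*}
for every $t\in(0,T)$, which is the assertion of Proposition \ref{Pros3.1}; the time-continuity statements in Theorem \ref{thm1.1} then follow from this bound by a routine Bona--Smith type argument, and uniqueness from a standard energy estimate on the difference of two solutions.

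The main obstacle, as in all Boussinesq-type problems with zero velocity dissipation, is controlling $\|\nabla u\|_{L^\infty}$ in time: one has no parabolic smoothing for $u$, so the $L^\infty$ bound on the vorticity from Lemma \ref{lem3.2} (itself obtained only after the delicate combined-quantity analysis of Lemmas \ref{lem3.1}--\ref{lem3.2}) must be leveraged through the logarithmic inequality rather than a direct estimate, and it is essential that the resulting logarithmic factor is integrable and enters linearly so that Lemma \ref{lem2.7} applies. Once that is in place, everything else is bookkeeping with the commutator lemmas and Young's inequality.
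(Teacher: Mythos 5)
Your proposal follows essentially the same route as the paper: the $\Lambda^s$ energy estimate with the commutator Lemma \ref{lem2.2} for the transport terms, the bounds $\|\nabla\omega\|_{L^{\infty}}$, $\|\Omega\|_{L^{\infty}}$, $\|\Omega\|_{L^{4}}$ and $\int_0^T\|\Lambda^{\frac32}\theta\|_{L^2}^2\mathrm{d}t$ from Lemmas \ref{lem3.1}--\ref{lem3.3} as coefficients, the logarithmic Sobolev inequality of Lemma \ref{lem2.6} to convert $\|\nabla u\|_{L^{\infty}}$ into $\|\Omega\|_{L^{\infty}}\log(e+\|\Lambda^{s}u\|_{L^{2}})$, and the logarithmic Gronwall inequality of Lemma \ref{lem2.7} to close; this is exactly the structure of the paper's proof of Proposition \ref{Pros3.1}.

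The one step that does not work as written is your treatment of the temperature transport term. You propose to estimate $\int\Lambda^{s}(u\cdot\nabla\theta)\Lambda^{s}\theta\,\mathrm{d}x$ through $\|[\Lambda^{s},u\cdot\nabla]\theta\|_{H^{-1/2}}\|\Lambda^{s}\theta\|_{H^{1/2}}$ and Lemma \ref{lem2.3} ``exactly as in (\ref{3.8})'', but Lemma \ref{lem2.3} is a commutator estimate for $[\Lambda^{\epsilon},u\cdot\nabla]$ with $\epsilon\in(0,1)$ (and Besov index in $(-1,1-\epsilon)$); in (\ref{3.8}) this was legitimate precisely because $\frac12<k\leq\frac{r-2}{r}<1$, whereas here the commutator order is $s>2$, which falls outside the lemma's range. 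The paper avoids this by staying with the Kato--Ponce commutator of Lemma \ref{lem2.2} in the $L^{4/3}$--$L^{4}$ duality: $\|[\Lambda^{s},u\cdot\nabla]\theta\|_{L^{4/3}}\leq C(\|\nabla u\|_{L^{4}}\|\Lambda^{s}\theta\|_{L^{2}}+\|\Lambda^{s}u\|_{L^{2}}\|\nabla\theta\|_{L^{4}})$, together with $\|\nabla u\|_{L^{4}}\leq C\|\Omega\|_{L^{4}}$, $\|\nabla\theta\|_{L^{4}}\leq C\|\Lambda^{\frac32}\theta\|_{L^{2}}$ and $\|\Lambda^{s}\theta\|_{L^{4}}\leq C\|\Lambda^{s+\frac12}\theta\|_{L^{2}}$, after which half of the dissipation $\|\Lambda^{s+\frac12}\theta\|_{L^{2}}^{2}$ is absorbed and the remaining coefficient $\|\Omega\|_{L^{4}}^{2}+\|\Lambda^{\frac32}\theta\|_{L^{2}}^{2}$ is integrable by Lemmas \ref{lem3.1} and \ref{lem3.3}. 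With that substitution (which uses only tools you already invoke elsewhere), your argument coincides with the paper's and is complete.
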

\begin{proof}
First, applying the operator $\Lambda^{s}$ to the system ({\ref{1.2}}), and dotting the first equation of ({\ref{1.2}}) by $\Lambda^{s}u$, the second equation by $\Lambda^{s}\omega$, the third equation by $\Lambda^{s}\theta$, respectively, integrating over $\mathbb{R}^{2}$, we have
\begin{align}\label{3.23}
\nonumber &\frac{1}{2}\frac{\mathrm{d}}{\mathrm{d}t}(\|\Lambda^{s}u\|^{2}_{L^{2}}+\|\Lambda^{s}\omega\|^{2}_{L^{2}}+\|\Lambda^{s}\theta\|^{2}_{L^{2}})
+\|\Lambda^{s+1}\omega\|^{2}_{L^{2}}+2\|\Lambda^{s}\omega\|^{2}_{L^{2}}+\|\Lambda^{s+\frac{1}{2}}\theta\|^{2}_{L^{2}}\\
\nonumber &\leq-\int_{\mathbb{R}^{2}}[\Lambda^{s},u\cdot\nabla]u\cdot\Lambda^{s}u\mathrm{d}x
+\int_{\mathbb{R}^{2}}\Lambda^{s}(\nabla\times\omega)\cdot\Lambda^{s}u\mathrm{d}x+\int_{\mathbb{R}^{2}}\Lambda^{s}(e_{2}\theta)\cdot\Lambda^{s}u\mathrm{d}x\\
\nonumber &~~~-\int_{\mathbb{R}^{2}}[\Lambda^{s},u\cdot\nabla]\omega\cdot\Lambda^{s}\omega \mathrm{d}x+\int_{\mathbb{R}^{2}}\Lambda^{s}(\nabla\times u)\cdot\Lambda^{s}\omega \mathrm{d}x\\
\nonumber &~~~-\int_{\mathbb{R}^{2}}[\Lambda^{s},u\cdot\nabla]\theta\cdot\Lambda^{s}\theta \mathrm{d}x+\int_{\mathbb{R}^{2}}\Lambda^{s}(u\cdot e_{2})\cdot\Lambda^{s}\theta \mathrm{d}x\\
 &~~~=J_{1}+J_{2}+J_{3}+J_{4}+J_{5}+J_{6}+J_{7}.
\end{align}
We can estimate $J_{1}-J_{7}$ as follows. By the H\"{o}lder inequality and the commutator estimate in Lemma {\ref{lem2.2}}, we have
\begin{align*}
|J_{1}|\leq C\|[\Lambda^{s},u\cdot\nabla]u\|_{L^{2}}\|\Lambda^{s}u\|_{L^{2}}
\leq C\|\nabla u\|_{L^{\infty}}\|\Lambda^{s}u\|^{2}_{L^{2}}.
\end{align*}
\begin{align*}
|J_{2}+J_{5}|\leq C\|\Lambda^{s+1}\omega\|_{L^{2}}\|\Lambda^{s}u\|_{L^{2}}\leq
\frac{1}{2}\|\Lambda^{s+1}\omega\|^{2}_{L^{2}}+C\|\Lambda^{s}u\|^{2}_{L^{2}},
\end{align*}
\begin{align*}
|J_{3}+J_{7}|\leq C(\|\Lambda^{s}\theta\|^{2}_{L^{2}}+\|\Lambda^{s}u\|^{2}_{L^{2}}).
\end{align*}
By an similar argument to derive $J_{1}$, we can deduce that
\begin{align*}
|J_{4}|&\leq C(\|\nabla u\|_{L^{\infty}}\|\Lambda^{s}\omega\|_{L^{2}}+
\|\Lambda^{s}u\|_{L^{2}}\|\nabla\omega\|_{L^{\infty}})\|\Lambda^{s}\omega\|_{L^{2}}\\
&\leq C(\|\nabla u\|_{L^{\infty}}+\|\nabla \omega\|_{L^{\infty}})(\|\Lambda^{s}u\|_{L^{2}}^{2}+\|\Lambda^{s}\omega\|_{L^{2}}^{2}),
\end{align*}
and
\begin{align*}
|J_{6}|&\leq C\|[\Lambda^{s},u\cdot\nabla]\theta\|_{L^{\frac{4}{3}}}\|\Lambda^{s}\theta\|_{L^{4}}\\
&\leq C(\|\nabla u\|_{L^{4}}\|\Lambda^{s}\theta\|_{L^{2}}+
\|\Lambda^{s}u\|_{L^{2}}\|\nabla\theta\|_{L^{4}})\|\Lambda^{s+\frac{1}{2}}\theta\|_{L^{2}}\\
&\leq C(\|\Lambda^{\frac{3}{2}}\theta\|_{L^{2}}^{2}+\|\Omega\|^2_{L^4})(\|\Lambda^{s}u\|_{L^{2}}^{2}+\|\Lambda^{s}\theta\|_{L^{2}}^{2})
+\frac{1}{2}\|\Lambda^{s+\frac{1}{2}}\theta\|_{L^{2}}^{2}.
\end{align*}
Inserting the above estimates into ({\ref{3.23}}), and applying the logarithmic Sobolev inequality in Lemma \ref{lem2.6}, we have
\begin{align*}
&\frac{\mathrm{d}}{\mathrm{d}t}(\|\Lambda^{s}u\|^{2}_{L^{2}}+\|\Lambda^{s}\omega\|^{2}_{L^{2}}+\|\Lambda^{s}\theta\|^{2}_{L^{2}})
+\|\Lambda^{s+1}\omega\|^{2}_{L^{2}}+\|\Lambda^{s+\frac{1}{2}}\theta\|^{2}_{L^{2}}\\
&\leq C(\|\nabla u\|_{L^{\infty}}+\|\nabla\omega\|_{L^{\infty}}+\|\Lambda^{\frac{3}{2}}\theta\|_{L^{2}}^{2})
(\|\Lambda^{s}u\|^{2}_{L^{2}}+\|\Lambda^{s}\omega\|^{2}_{L^{2}}+\|\Lambda^{s}\theta\|^{2}_{L^{2}})\\
&\leq C(1+\|\Omega\|_{L^{\infty}}\mathrm{log}(e+\|\Lambda^{s}u\|_{L^{2}})+
\|\nabla\omega\|_{L^{\infty}}+\|\Lambda^{\frac{3}{2}}\theta\|_{L^{2}}^{2})\\
&~~~\times(\|\Lambda^{s}u\|^{2}_{L^{2}}+\|\Lambda^{s}\omega\|^{2}_{L^{2}}+\|\Lambda^{s}\theta\|^{2}_{L^{2}}).\\
\end{align*}
By the logarithmic Gronwall's inequality in Lemma \ref{lem2.7}, one has
\begin{align*}
&(\|u\|^{2}_{H^{s}}+\|\omega\|^{2}_{H^{s}}+\|\theta\|^{2}_{H^{s}})+\int^{T}_{0}\|\nabla\omega\|^{2}_{H^{s}}\mathrm{d}\tau+\int^{T}_{0}\|\Lambda^{\frac{1}{2}} \theta\|^{2}_{H^{s}}\mathrm{d}\tau\\
&\leq C(\|u_{0}\|_{H^{s}},\|\omega_{0}\|_{H^{s}},\|\theta_{0}\|_{H^{s}},T).
\end{align*}
The proof of Proposition {\ref{Pros3.1}} is completed. With the Proposition {\ref{Pros3.1}} at our disposal, by a standard continuation argument of local solutions, we complete the proof of Theorem \ref{thm1.1}.

\end{proof}

\vskip .3in
\textbf{Declarations}

\textbf{Conflict of interest}: The authors declared that they have no conflict of interest.

\vskip .2in
\textbf{Acknowledgements.}
The authors would like to thank Professor Zhuan Ye for discussion on this topic.

\vskip .3in


\begin{thebibliography}{00} \frenchspacing


\bibitem{BCD2011}
H. Bahouri, J. Y. Chemin, R. Danchin. Fourier analysis and nonliear partial differential equations. \emph{Grundlehren der Mathematischen Wissenschaften, Springer-Verlag, Heidelberg,} \textbf{251} (2011).

\bibitem{C2006}
D. H. Chae. Global regularity for the 2D Boussinesq equations with partial viscosity terms.
\emph{Adv. Math.,} Comput. Geosci. \textbf{203(2)} (2006), 497-513.


\bibitem{CMZ2007}
Q. L. Chen, C. X. Miao, Z. F. Zhang. A new Bernstein inequality and the 2D dissipative quasigeostrophic equation.
\emph{Comm. Math. Phys.,} \textbf{271} (2007), 821-838.

\bibitem{DLW2017}
B. Q. Dong, J. N. Li, J. H. Wu. Global well-posedness and large-time decay for the 2D micropolar equations.
\emph{J. Differential Equations,} \textbf{262} (2017), 3488-3523.

\bibitem{DZ2010}
B. Q. Dong, Z. F. Zhang. Global regularity of the 2D micropolar fluid flows with zero angular viscosity.
\emph{J. Differential Equations,} \textbf{349(1)} (2010), 200-213.

\bibitem{DS2021}
L. H. Deng, H. F. Shang. Global regularity for the micropolar Rayleigh-B\'{e}nard problem with only velocity dissipation.
\emph{Proc. Roy. Soc. Edinburgh Sect.,} A (2021).

\bibitem{E1966}
A. C. Eringen. Theory of micropolar fluids.
 \emph{J. Math. Mech.,} \textbf{16} (1966), 1-18.

\bibitem{GR1977}
 G. P. Galdi, S. Rionero. A note on the existence and uniqueness of solutions of micropolar fluid equations.
\emph{Internet. J. Engrg. Sci.,} \textbf{15(2)} (1977), 105-108.

\bibitem{AHM2020} A. Hanachi, H. Houamed, M. Zerguine. On the global well-posedness of the axisymmetric viscous Boussinesq system in critical Lebesgue spaces.
    \emph{Discrete Contin. Dyn. Syst.,} \textbf{40(11)} (2020), 6473-6506.

\bibitem{HK2007} T. Hmidi, S. Keraani. On the global well-posedness of the two-dimensional Boussinesq system with a zero diffusivity.
    \emph{Adv. Differential Equations,} \textbf{12(4)} (2007), 461-480.

\bibitem{HKR2010} T. Hmidi, S. Keraani, F. Rousset. Global well-posedness for a Boussinesq-Navier-Stokes system with critizal dissipation.
    \emph{J. Differential Equations,} \textbf{249} (2010), 2147-2174.

\bibitem{HKR2011} T. Hmidi, S. Keraani, F. Rousset. Global well-posedness for Euler-Boussinesq system with critizal dissipation.
    \emph{Comm. Partial Differential Equations,} \textbf{36} (2011), 420-445.

\bibitem{KPV1991} C. E. Kenig, G. Ponce, L. Vega. Well-posedness of the initial value problem for the Korteweg-de Vries equations.
    \emph{J. Amer. Math. Soc.,} \textbf{4(2)} (1991), 323-347.

\bibitem{KLL2019} P. Kalita, J. Lange, G. {\L}ukaszewicz. Micropolar meets Newtonian. The Rayleigh--B\'{e}nard problem.
    \emph{Physical D.,} \textbf{392} (2019), 57-80.

\bibitem{KL2020} P. Kalita, G. {\L}ukaszewicz. Micropolar meets Newtonian in 3D. The Rayleigh-B\'{e}nard problem for large Prandtl numbers.
    \emph{Nonlinearity,} \textbf{33} (2020), 5686-5732.

\bibitem{KP1988} T. Kato, G. Ponce. Commutator estimates and the Euler and Navier-Stokes equations.
    \emph{Comm. Pure Appl. Math.,} \textbf{41(7)} (1988), 891-907.

\bibitem{M2004} C. X. Miao. Harmonic Analysis and Application to Partial Differential Equations.
    \emph{Science Press,} Beijing, (2004), second edition.

\bibitem{N2005} J. Ning. The maximum principle and the global attractor for the dissipative 2D quasi-geostrophic equations.
    \emph{Comm. Math. Phys.,} \textbf{255(1)} (2005), 161-181.

\bibitem{SS2005} M. E. Schonbek, T. Schonbek. Moments and lower bounds in the far-field of solutions to quasi-geostrophic flows.
    \emph{Discrete Contin. Dyn. Syst.,} \textbf{13} (2005), 1277-1304.

\bibitem{T2006} A. Tarasi\'{n}ska.  Global attractor for heat convection problem in a micropolar fluid.
     \emph{Math. Methods Appl. Sci.,} \textbf{29} (2006), 1215-1236.

\bibitem{T1983} H. Triebel. Theory of function spaces, Monograph in Mathematics, Vol.78.
    \emph{Birkh\"{a}user Verlag, Basel,} (1983).

\bibitem{WX2012} G. Wu, L. T. Xue. Global well-posedness for the 2D B\'{e}nard system with fractional diffusivity and Yudovich's type data.
    \emph{J. Differential Equations,} \textbf{253} (2012), 100-125.

\bibitem{W2021} S. Wang. Global well-posedness for the 2D micropolar Rayleigh--B\'{e}nard convection problem without velocity dissipation.
    \emph{Acta. Math. Sin. (Engl. Ser.),} \textbf{37} (2021), 1053-1065.

\bibitem{XC2020} F. Y. Xu, M. L. Chi. Global regularity for the 2D micropolar Rayleigh-B\'{e}nard convection system with the zero diffusivity.
    \emph{Appl. Math. Lett.,} \textbf{108} (2020), 106508.

\bibitem{Y2016} Z. Ye, X. Xu. Global well-posedness of the 2D Boussinesq equations with fractional Laplacian dissipation.
    \emph{J. Differential Equations,} \textbf{260} (2016), 6716-6744.

\bibitem{Y2017} Z. Ye. Regularity criteria of the 2D B\'{e}nard equations with critical and supercritical dissipation.
    \emph{Nonlinear Anal.,} \textbf{156} (2017), 111-143.

\bibitem{Y2018} Z. Ye. Some new regularity criteria for the 2D Euler-Boussinesq equations via the temperature.
    \emph{Acta Appl. Math.,} \textbf{157} (2018), 141-169.

\bibitem{Ye2023} Z. Ye. Global well-posedness for the 2D Euler-Boussinesq-B\'{e}nard equations with critical dissipation. arXiv:2306.10670v1 (2023).

\end{thebibliography}
\end{document}